\newtheorem{theorem}{Theorem}[section]
\newtheorem{proposition}[theorem]{Proposition}
\newtheorem{corollary}[theorem]{Corollary}
\newtheorem{lemma}[theorem]{Lemma}
\theoremstyle{definition}
\newtheorem{definition}[theorem]{Definition}
\newtheorem{example}[theorem]{Example}
\newtheorem{question}[theorem]{Question}
\newtheorem{remark}[theorem]{Remark}
\DeclareMathOperator{\ad}{ad}
\title{Patterns on the numerical duplication by their admissibility degree}
\author{Alessio Borzì
		\thanks{Dipartimento di Matematica e Informatica, Università degli studi di Catania.} \thanks{Scuola Superiore di Catania.}}
\begin{document}
	
	\maketitle
	
	\begin{abstract}
		We develop the theory of patterns on numerical semigroups in terms of the admissibility degree. We prove that the Arf pattern induces every strongly admissible pattern, and determine all patterns equivalent to the Arf pattern. We study patterns on the numerical duplication $S \Join^d E$ when $d \gg0$. We also provide a definition of patterns on rings.
	\end{abstract}
	
	\section*{Introduction}
	
	A numerical semigroup $S$ is an additive submonoid of $\mathbb{N}$ with finite complement in $\mathbb{N}$. The set of values of a Noetherian, one-dimensional, analytically irreducible, local, domain is a numerical semigroup, therefore the study of numerical semigroups is related to the study of this class of rings. In \cite{lipman1971stable}, Lipman introduces and motivates the study of Arf rings, which constitute an important class of rings for the classification problem of singular curve branches. A good reference for the study of Arf rings in the analytically irreducible case is \cite{barucci1997maximality}. The value semigroup of an Arf ring is an Arf numerical semigroup. We say that a numerical semigroup $S$ is Arf if for every $x,y,z \in S$ with $x \geq y \geq z$ we have $x+y-z \in S$. There are several works in the literature about Arf numerical semigroups, see for instance \cite{rosales2004arf}, \cite{garcia2017parametrizing}. Note that Arf semigroups are related to the polynomial $x+y-z$. In \cite{bras2006patterns}, Bras-Am\'{o}ros and Garc\'{i}a-S\'{a}nchez generalize the definition of Arf semigroup to any linear homogeneous polynomial, introducing the theory of patterns on numerical semigroups \cite{stokes2016patterns}, \cite{bras2013nonhomogeneous}, \cite{stokes2014linear}, \cite{sun2017generalizing}.
	
	In this manner, Arf numerical semigroups are the semigroups that admit the \emph{Arf pattern} $x+y-z$. In addition, Arf numerical semigroups can be characterized in terms of their additive behaviour (see for instance \cite{bras2003improvements}, \cite{bras2009numerical}). Therefore, one can translate similar characterizations for certain classes of patterns.
	
	Given a numerical semigroup $S$ we can consider the quotient of $S$ by a positive integer $d \in \mathbb{N}$
	\[ \frac{S}{d} = \{ x \in \mathbb{N}: dx \in S \}. \]
	In \cite{d2013numerical}, D'Anna and Strazzanti define a semigroup construction, called the numerical duplication, that is, in a certain sense, the reverse operation of the quotient by $2$. If $A \subseteq \mathbb{N}$, the set of doubles is denoted by $2 \cdot A = \{ 2a: a \in A \}$ (note that $2 \cdot A \neq 2A = A+A$). Given a numerical semigroup $S$, a \emph{semigroup ideal} of $S$ is a subset $E \subseteq S$ such that $E+S \subseteq E$. If $d \in S$ is an odd integer, the numerical duplication of $S$ with respect to the semigroup ideal $E$ and $d$ is
	\[ S \Join^d E = 2 \cdot S \cup (2 \cdot E + d). \]
	The numerical duplication can be seen as the value semigroup of a quadratic quotient of the Rees algebra, see for instance \cite{barucci2015family}, \cite{borzi2018characterization}.
	This construction generalizes Nagata's idealization and the amalgamated duplication (see \cite{d2007amalgamated}), and it is one of the main tools used in \cite{oneto2017one} to give a negative answer to a problem of Rossi \cite{rossi2011hilbert}.
	
	In \cite{borzi2018characterization} it was characterized when the numerical duplication $S \Join^d E$ is Arf. The characterization is given in terms of the multiplicity sequence of the Arf semigroup $S$. A natural question is how this characterization can be generalized to any pattern. This paper deals with this question.
	
	In particular, in Section \ref{Section 2} and \ref{Section 3} we develop the theory of patterns on numerical semigroups in terms of the admissibility degree, generalizing some results of \cite{bras2006patterns} proved for Boolean patterns. Further, we prove that the Arf pattern induces every strongly admissible pattern and we determine the family of patterns equivalent to the Arf pattern. In Section \ref{Section 4} we characterize when the numerical duplication $S \Join^d E$ admits a monic pattern for $d \gg 0$ and give some examples of the general case. In Section \ref{Section 5} we give some observations and trace possible future work about pattern on rings. \\
	
	Several computations are performed by using the GAP system \cite{gap2015gap} and, in
	particular, the NumericalSgps package \cite{delgadonumericalsgps}.

	\section{Preliminaries}
	
	Let $S$ be a numerical semigroup, the \emph{multiplicity} of $S$ is the integer $\operatorname{m}(S) = \min (S \setminus \{0\})$, the \emph{conductor} of $S$ is $\operatorname{c}(S) = \min\{ x \in \mathbb{N}: x+\mathbb{N} \subseteq S \}$. If $E \subseteq S$ is a semigroup ideal of $S$, set $\operatorname{c}(E) = \min\{ x \in \mathbb{N}: x+\mathbb{N} \subseteq E \}$. Note that, if $d \in S$ is an odd integer, from \cite[Proposition 2.1]{d2013numerical} the conductor of the numerical duplication is $c(S \Join^d E)=2\operatorname{c}(E)+d-1$.
	
	A \emph{pattern} $p(x_1,\dots,x_n)$ of length $n$ is a linear homogeneous polynomial in $n$ variables with non-zero integer coefficients. The pattern of length zero is the zero polynomial $p=0$. A numerical semigroup $S$ \emph{admits} a pattern $p$ if for every $s_1,\dots,s_n \in S$ with $s_1 \geq \dots \geq s_n$ we have $p(s_1,\dots,s_n) \in S$. The family of all numerical semigroups admitting $p$ is denoted by $\mathscr{S}(p)$. Given two patterns $p_1,p_2$, we say that $p_1$ \emph{induces} $p_2$ if $\mathscr{S}(p_1) \subseteq \mathscr{S}(p_2)$; we say that $p_1$ and $p_2$ are \emph{equivalent} if they induce each other, or equivalently $\mathscr{S}(p_1) = \mathscr{S}(p_2)$. 
	Let $p$ be a pattern of length $n$, set
	\[ p(x_1,\dots,x_n) = \sum_{i=1}^n a_i x_i,  \]
	and $b_i = \sum_{j \leq i}a_j$, we will keep this notation throughout. Note that we can write
	\[
	\begin{split}
	p(x_1,\dots,x_n) & = a_1x_1+\dots+a_nx_n = \\
	& = b_1(x_1-x_2)+\dots+b_{n-1}(x_{n-1}-x_n)+b_nx_n,
	\end{split}
	\]
	we will use frequently this decomposition in the sequel. The pattern $p$ is \emph{admissible} if $\mathscr{S}(p) \neq \emptyset$, that is, $p$ is admitted by some numerical semigroup. Set
	\[ p' = \begin{cases}
		p-x_1 & \text{if } a_1 > 1 \\
		p(0,x_1,\dots,x_{n-1}) & \text{if } a_1 = 1,
	\end{cases} \]
	and define recursively $p^{(0)} = p$ and $p^{(i)} = (p^{(i-1)})'$ for $i \in \mathbb{N} \setminus \{0\}$.	The \emph{admissibility degree} of $p$, denoted by $\ad(p)$, is the least integer $k$ such that $p^{(k)}$ is not admissible, if such integer exists, otherwise is $\infty$. If $p'$ is admissible, $p$ is \emph{strongly admissible}. With this definitions, $p$ is admissible if $\ad(p) \geq 1$, strongly admissibile if $\ad(p) \geq 2$.
	
	\begin{proposition}\cite[Theorem 12]{bras2006patterns}\label{admissible characteriazation}
		For a pattern $p$ the following conditions are equivalent
		\begin{enumerate}
			\item $p$ is admissible,
			\item $\mathbb{N}$ admits $p$,
			\item $b_i \geq 0$ for all $i \in \{1,\dots,n\}$.
		\end{enumerate}
	\end{proposition}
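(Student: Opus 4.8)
The plan is to prove the three statements equivalent by closing the cycle $(3)\Rightarrow(2)\Rightarrow(1)\Rightarrow(3)$, disposing of the first two implications quickly and concentrating on the last.

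For $(3)\Rightarrow(2)$ I would use the telescoping decomposition recorded above. If $b_i\ge 0$ for all $i$ and $s_1\ge\dots\ge s_n$ are natural numbers, then in
\[ p(s_1,\dots,s_n)=\sum_{i=1}^{n-1}b_i(s_i-s_{i+1})+b_n s_n \]
every difference $s_i-s_{i+1}$ is nonnegative and $s_n\ge 0$, so each summand is nonnegative and $p(s_1,\dots,s_n)\in\mathbb{N}$; thus $\mathbb{N}$ admits $p$. The implication $(2)\Rightarrow(1)$ is immediate, since $\mathbb{N}$ is itself a numerical semigroup, so $\mathbb{N}\in\mathscr{S}(p)$ forces $\mathscr{S}(p)\neq\emptyset$.

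The substantive step is $(1)\Rightarrow(3)$, which I would prove by contraposition: assuming $b_k<0$ for some index $k$, I will exhibit, for an \emph{arbitrary} numerical semigroup $S$, a weakly decreasing tuple on which $p$ takes a negative value, so that $p(s_1,\dots,s_n)\notin S$ and hence $S\notin\mathscr{S}(p)$; as $S$ was arbitrary, $\mathscr{S}(p)=\emptyset$. The key observation is that one can annihilate all the other difference-terms in the decomposition by keeping the $s_i$ constant on the blocks $\{1,\dots,k\}$ and $\{k+1,\dots,n\}$. Concretely, pick any positive $N\in S$ and set $s_1=\dots=s_k=N$ and $s_{k+1}=\dots=s_n=0$ (recall $0\in S$); this tuple is weakly decreasing, every difference vanishes except $s_k-s_{k+1}=N$, and the decomposition collapses to $p(s_1,\dots,s_n)=b_kN<0$.

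The only point requiring care — and the main, rather modest, obstacle — is to verify that the chosen evaluation points genuinely lie in $S$ and form an admissible (weakly decreasing) input while isolating the single negative coefficient $b_k$; using $0$ together with one positive element of $S$ does exactly this, and notably renders the conductor and multiplicity of $S$ irrelevant to the argument. This closes the cycle and hence yields the equivalence.
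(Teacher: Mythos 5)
The paper states this result only as a citation of \cite[Theorem 12]{bras2006patterns} and supplies no proof of its own, so there is no in-paper argument to compare against; your proof is correct and is the standard one. The cycle $(3)\Rightarrow(2)\Rightarrow(1)\Rightarrow(3)$ is sound: the telescoping decomposition $p=\sum_{i=1}^{n-1}b_i(x_i-x_{i+1})+b_nx_n$ (recorded in the paper just before the statement) handles $(3)\Rightarrow(2)$, and your contrapositive step for $(1)\Rightarrow(3)$ --- evaluating at $s_1=\dots=s_k=N$, $s_{k+1}=\dots=s_n=0$ to collapse the value to $b_kN<0$ for an arbitrary numerical semigroup $S$ --- is exactly the right construction.
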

	
	\begin{corollary}\label{admissibility 1 exist bi = 0}
		If $p$ has admissibility degree $1$, then there exists $i \in \{ 1,\dots,n \}$ such that $b_i = 0$.
	\end{corollary}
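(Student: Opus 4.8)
The plan is to unwind the definition of the admissibility degree and then reduce everything to the characterization of admissibility provided by Proposition \ref{admissible characteriazation}. Saying $\ad(p) = 1$ means exactly that $p = p^{(0)}$ is admissible while $p' = p^{(1)}$ is not. By the cited proposition, admissibility of $p$ is equivalent to $b_i \geq 0$ for every $i \in \{1,\dots,n\}$, and non-admissibility of $p'$ is equivalent to the existence of some index at which the corresponding partial sum of $p'$ is strictly negative. So the whole argument reduces to comparing the partial sums of $p'$ with those of $p$, and the definition of $p'$ splits into two cases according to the value of $a_1$.

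First I would treat the case $a_1 > 1$. Here $p' = p - x_1$, so its coefficients are $a_1 - 1, a_2, \dots, a_n$ (all still non-zero, since $a_1 - 1 \geq 1$), and each partial sum drops by exactly one: writing $b_i'$ for the partial sums of $p'$, one gets $b_i' = b_i - 1$ for all $i$. Non-admissibility of $p'$ then yields an index $i$ with $b_i' = b_i - 1 < 0$, that is $b_i \leq 0$; combined with $b_i \geq 0$ coming from admissibility of $p$, this forces $b_i = 0$.

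Then I would treat the case $a_1 = 1$. Here $p' = p(0, x_1, \dots, x_{n-1})$ has length $n-1$ and coefficients $a_2, \dots, a_n$, so its partial sums satisfy $b_i' = a_2 + \dots + a_{i+1} = b_{i+1} - b_1 = b_{i+1} - 1$, using $b_1 = a_1 = 1$. As before, non-admissibility of $p'$ gives an index $i$ with $b_{i+1} - 1 < 0$, hence $b_{i+1} \leq 0$, and admissibility of $p$ gives $b_{i+1} \geq 0$, so $b_{i+1} = 0$.

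I expect the only real care needed to be the index bookkeeping in the second case, where passing to $p(0,x_1,\dots,x_{n-1})$ both shortens the pattern and relabels the variables, so that the $i$-th partial sum of $p'$ matches the $(i+1)$-th partial sum of $p$ rather than the $i$-th. Everything else is a direct translation through Proposition \ref{admissible characteriazation}, so I do not anticipate a genuine obstacle beyond keeping the two definitions of $p'$ straight.
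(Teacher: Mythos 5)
Your proof is correct and follows essentially the same route as the paper: translate non-admissibility of $p'$ through Proposition \ref{admissible characteriazation} into a negative partial sum, observe that the partial sums of $p'$ are exactly the $b_i - 1$ (in either case of the definition of $p'$), and combine with $b_i \geq 0$ from the admissibility of $p$. The paper merely compresses the two cases into a single formula $(a_1-1)+\sum_{j=2}^i a_j$, so your explicit case split and index bookkeeping are just a more detailed rendering of the same argument.
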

	\begin{proof}
		By hypothesis $p'$ is not admissibile, then from Proposition \ref{admissible characteriazation} there exists $i$ such that $(a_1-1) + \sum_{j=2}^i a_j = -1 \Rightarrow b_i = \sum_{j=1}^i a_j = 0$. 
	\end{proof}
	
	The \emph{trivializing pattern} is $x_1-x_2$, note that $\mathscr{S}(x_1-x_2) = \{ \mathbb{N} \}$, so from Proposition \ref{admissible characteriazation} it induces every admissibile pattern, in other words it induces every pattern $p$ with $\ad(p) \geq 1$. The \emph{Arf pattern} is $x_1+x_2-x_3$, it is equivalent to $2x_1 - x_2$ (see \cite[Example 5]{bras2006patterns}). The family $\mathscr{S}(x_1+x_2-x_3)$ is the family of Arf numerical semigroups. More in general, the \emph{subtraction pattern} of degree $k$ is the pattern $x_1+x_2+\dots+x_k-x_{k+1}$. So the trivializing pattern and the Arf pattern are the subtraction patterns of degree $1$ and $2$. Note that the admissibility degree of a subtraction pattern is equal to its degree.
	
	Following \cite[Chapter 6]{rosales2009numerical}, a \emph{Frobenius variety} is a nonempty family $\mathscr{F}$ of numerical semigroups such that
	\begin{enumerate}
		\item $S,T \in \mathscr{F} \Rightarrow S \cap T \in \mathscr{F}$,
		\item $S \in \mathscr{F} \setminus \{ \mathbb{N} \} \Rightarrow S \cup \{ F(S) \} \in \mathscr{F}$.
	\end{enumerate}
	
	\begin{proposition}\cite[Proposition 7.17]{rosales2009numerical}\label{frobenius varieties}
		If $p$ is a strongly admissible pattern, then $\mathscr{S}(p)$ is a Frobenius variety.
	\end{proposition}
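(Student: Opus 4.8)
The plan is to verify directly the three requirements packaged in the definition of a Frobenius variety: that $\mathscr{S}(p)$ is nonempty, closed under intersection, and closed under the operation $S \mapsto S \cup \{F(S)\}$ for $S \neq \mathbb{N}$. Nonemptiness is immediate, since $p$ strongly admissible is in particular admissible, so by Proposition \ref{admissible characteriazation} the semigroup $\mathbb{N}$ admits $p$, whence $\mathbb{N} \in \mathscr{S}(p)$. Closure under intersection is equally routine: if $S, T \in \mathscr{S}(p)$ then $S \cap T$ is again a numerical semigroup, and any chain $s_1 \geq \dots \geq s_n$ of elements of $S \cap T$ lies simultaneously in $S$ and in $T$, so $p(s_1, \dots, s_n)$ belongs to both and hence to $S \cap T$. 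Neither of these two steps uses strong admissibility; only the third does.

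Before attacking the third property I would record the reformulation of strong admissibility that makes it usable. Applying Proposition \ref{admissible characteriazation} to $p'$ and computing the partial sums of $p'$ in the two cases of its definition shows that $p$ is strongly admissible if and only if $b_i \geq 1$ for every $i \in \{1, \dots, n\}$. Indeed, if $a_1 > 1$ then $p' = p - x_1$ has partial sums $b_i - 1$, so its admissibility means $b_i \geq 1$ for all $i$; if $a_1 = 1$ then $p' = p(0, x_1, \dots, x_{n-1})$ has partial sums $b_i - b_1 = b_i - 1$ for $i \geq 2$, giving the same conclusion, with $b_1 = a_1 = 1$ automatic. This positivity of all the $b_i$ is the only consequence of the hypothesis I will need.

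For the third property, fix $S \in \mathscr{S}(p) \setminus \{\mathbb{N}\}$, write $F = F(S)$ for its Frobenius number, and set $T = S \cup \{F\}$; this is a numerical semigroup, since $F \geq 1$ forces $F + s > F$ for every nonzero $s \in T$, so $T$ is closed under addition. Given a chain $t_1 \geq \dots \geq t_n$ of elements of $T$, I want to show $v := p(t_1, \dots, t_n) \in T$. The crux is the lower bound obtained from the decomposition
\[ v = \sum_{i=1}^{n-1} b_i (t_i - t_{i+1}) + b_n t_n \geq \sum_{i=1}^{n-1} (t_i - t_{i+1}) + t_n = t_1, \]
where the inequality uses $b_i \geq 1$ together with $t_i - t_{i+1} \geq 0$ and $t_n \geq 0$, and the final equality is the telescoping sum. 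Now if all the $t_i$ already lie in $S$, then $v \in S \subseteq T$ because $S$ admits $p$. Otherwise some $t_j = F$, so $t_1 \geq t_j = F$ and hence $v \geq t_1 \geq F$; on the other hand, if $v \notin S$ then $v \leq F$ since every integer exceeding $F$ belongs to $S$. These force $v = F \in T$, completing the argument.

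The computational steps are all light; the conceptual heart is isolating the equivalence ``strongly admissible $\iff b_i \geq 1$ for all $i$'' and recognizing that it yields exactly the inequality $v \geq t_1$, which is what converts $v \notin S$ into $v = F$. I expect the main obstacle to be nothing technical but rather pinpointing this role of strong admissibility: mere admissibility gives $b_i \geq 0$ and hence only $v \geq 0$, which is too weak to place a possibly large value $v$ at the single extra point $F$, so the strengthened hypothesis is precisely what makes closure axiom (2) hold.
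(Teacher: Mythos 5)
Your proof is correct. The paper does not give its own argument for this proposition---it simply cites \cite[Proposition 7.17]{rosales2009numerical}---but your proof is essentially the standard one: the reformulation ``strongly admissible $\iff b_i \geq 1$ for all $i$,'' combined with the telescoping decomposition to get $p(t_1,\dots,t_n) \geq t_1$, is exactly the mechanism used in the cited source to handle closure under $S \mapsto S \cup \{F(S)\}$, and your treatment of nonemptiness and intersections is routine and sound.
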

	
	Given a Frobenius variety $\mathscr{F}$, it is possible to define the closure of a numerical semigroup $S$ as the smallest (with respect to set inclusion) numerical semigroup in $\mathscr{F}$ that contains $S$. From this idea, we can define the notion of system of generators with respect to the variety. In addition, we can construct a tree of all numerical semigroups in $\mathscr{F}$ rooted in $\mathbb{N}$ and such that $T$ is a son of $S$ if and only if $T = S \cup \{F(S)\}$.
	
	From Proposition \ref{frobenius varieties}, these definitions generalize many notions given in \cite{bras2006patterns}, for instance $p$-closure or $p$-system of generators.
	
	\section{Patterns and their admissibility degree}\label{Section 2}
	
	
	In \cite{stokes2016patterns} and \cite{sun2017generalizing} it was noted that a pattern $p$ is strongly admissibile (i.e. $\ad(p) \geq 2)$ if and only if $b_i \geq 1$ for all $i \in \{1,\dots,n\}$. Of course if $b_i \geq k$ for all $i \in \{1,\dots,n\}$ then $\ad(p) \geq k+1$.
	
	\begin{proposition}\label{admissibility min k,i}
		If a pattern $p$ has admissibility degree at least $k+1$, then $b_i \geq \nolinebreak \min\{i,k\}$ for all $i \in \{ 1,\dots,n \}$.
	\end{proposition}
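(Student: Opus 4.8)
The plan is to argue by induction on $k$, exploiting the fact that passing to the derivative drops the admissibility degree by one: since $(p')^{(j)} = p^{(j+1)}$, we have $\ad(p') = \ad(p) - 1$ whenever $p$ is admissible. The base case $k = 0$ is immediate, because $\ad(p) \geq 1$ just says that $p$ is admissible, and then Proposition \ref{admissible characteriazation} gives $b_i \geq 0 = \min\{i,0\}$ for every $i$.

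For the inductive step I would assume the statement for $k-1$ (for all patterns) and suppose $\ad(p) \geq k+1$. As $k \geq 0$, the pattern $p$ is admissible, so $a_1 \geq 1$ and $p'$ is defined; moreover $\ad(p') \geq k$, so the inductive hypothesis applies to $p'$ and yields $b_i' \geq \min\{i,k-1\}$ for every index $i$ in the (possibly shorter) range of $p'$. The heart of the proof is then to transport these inequalities for $p'$ back to the desired inequalities for $p$, and this forces the two cases in the definition of $p'$ to be treated separately.

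In the case $a_1 > 1$ one has $p' = p - x_1$, which lowers only the first coefficient by one, so every partial sum decreases by one, $b_i' = b_i - 1$ for all $i \in \{1,\dots,n\}$; then $b_i = b_i' + 1 \geq \min\{i,k-1\} + 1 = \min\{i+1,k\} \geq \min\{i,k\}$. In the case $a_1 = 1$ the derivative $p(0,x_1,\dots,x_{n-1})$ deletes the first variable and shifts indices, giving $b_i' = b_{i+1} - 1$ for $i \in \{1,\dots,n-1\}$, so that $b_{i+1} \geq \min\{i,k-1\} + 1 = \min\{i+1,k\}$, i.e. $b_m \geq \min\{m,k\}$ for all $m \geq 2$; the leftover value $b_1 = a_1 = 1 \geq \min\{1,k\}$ is verified directly.

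The main obstacle is the bookkeeping in the second case: the derivative simultaneously shifts indices and shortens the pattern, so one must check that the inductive bound $\min\{i,k-1\}$ for $p'$ correctly becomes $\min\{i,k\}$ for $p$ after the shift, and separately confirm the boundary term $b_1$, which is not covered by the shifted inequalities. Once the two elementary identities $\min\{i,k-1\}+1 = \min\{i+1,k\}$ and $\min\{i+1,k\} \geq \min\{i,k\}$ are recorded, both cases close and the induction is complete.
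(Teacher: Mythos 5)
Your proof is correct and follows essentially the same route as the paper's: induction on $k$, base case from Proposition \ref{admissible characteriazation}, then the two cases $a_1=1$ and $a_1>1$ with the index relations $b_{i+1}=b_i'+1$ and $b_i=b_i'+1$ respectively, plus the separate check of $b_1$ in the monic case. No gaps.
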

	\begin{proof}
		Let $a_i'$ be the coefficients of $p'$ and $b_i' = \sum_{j\leq i} a_j'$. We proceed by induction on $k$. The base case follows from Proposition \ref{admissible characteriazation}. For the inductive step, firstly we assume that $p$ is monic. For all $i \in \{1,\dots,n-1\}$ we have $b_{i+1} = b_i'+1$, then
		\[ \ad(p) \geq k+1 \Rightarrow \ad(p') \geq k \Rightarrow b_i' \geq \min\{i,k-1\} \Rightarrow b_{i+1} \geq \min\{i+1,k\}, \]
		in addition $b_1 = 1 \geq \min\{1,k\}$. On the other hand, if $p$ is not monic, for all $i \in \{1,\dots,n\}$ we have $b_i = b_i'+1$, then
		\begin{gather*}
			\ad(p) \geq k+1 \Rightarrow \ad(p') \geq k \Rightarrow \\
			\Rightarrow b_i' \geq \min\{i,k-1\} \Rightarrow b_{i} \geq \min\{i+1,k\} \geq \min\{i,k\}.\qedhere
		\end{gather*}
	\end{proof}
	
	\begin{example}
		Proposition \ref{admissibility min k,i} cannot be inverted. For instance consider the pattern $p = x_1+3x_2-x_3$, then $b_i \geq \min\{i,k\}$ for all $k \in \mathbb{N}$, but $p$ has admissibility degree $4$.
	\end{example}

	The next result generalizes \cite[Lemma 42]{bras2006patterns} and the proof is similar.
	
	\begin{lemma}\label{Head Center Tail}
		An admissible pattern $p$ with finite admissibility degree can be written uniquely as
		\[ p(x_1,\dots,x_n) = H_p(x_1,\dots,x_h) + C_p(x_{h},\dots,x_t) + T_p(x_{t+1},\dots,x_n), \]
		where either $H_p = 0$ or all the coefficients of $H_p$ are positive and their sum is equal to $\ad(p)-1$, $C_p$ is admissible and the sum of all its coefficients is zero, $\ad(T_p) > 1$.
	\end{lemma}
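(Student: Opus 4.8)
The plan is to work entirely with the partial sums $b_i=\sum_{j\le i}a_j$ (with the convention $b_0=0$) and the decomposition $p=\sum_{i=1}^{n-1}b_i(x_i-x_{i+1})+b_nx_n$ recorded in the Preliminaries, reading off the three blocks from the shape of the sequence $b_0,b_1,\dots,b_n$. Write $k=\ad(p)$. Since $\ad(p)\ge k$, Proposition \ref{admissibility min k,i} gives $b_i\ge\min\{i,k-1\}$, so in particular $b_i\ge k-1$ for every $i\ge k-1$. I would then define the two breakpoints
\[ h=\min\{\,i\ge 0 : b_i\ge k-1\,\},\qquad t=\max\{\,i : b_i=k-1\,\}, \]
(so $h=0$, i.e. $H_p=0$, exactly when $k=1$) and set
\[ H_p=\sum_{i=1}^{h-1}a_i x_i+\big((k-1)-b_{h-1}\big)x_h,\quad C_p=\big(b_h-(k-1)\big)x_h+\sum_{i=h+1}^{t}a_i x_i,\quad T_p=\sum_{i=t+1}^{n}a_i x_i. \]
Inspecting the coefficient of $x_h$ shows $H_p+C_p+T_p=p$, the coefficient sum of $H_p$ is $b_{h-1}+((k-1)-b_{h-1})=k-1$, and that of $C_p$ is $b_t-(k-1)=0$.

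The heart of the argument, and the step I expect to be the main obstacle, is to prove that the sequence really has the shape the decomposition demands: that $b_0<b_1<\cdots<b_h$ strictly (so $H_p$ has positive coefficients) and that $b_i\ge k-1$ for all $i\ge h$ (so $C_p$ is admissible and $T_p$ is strongly admissible). Proposition \ref{admissibility min k,i} alone does not suffice, since it forces $b_i\ge k-1$ only from $i=k-1$ onward and a priori permits dips in the range $h\le i<k-1$; the examples indicate these dips never occur, and excluding them requires a sharper reading of $\ad$. I would obtain this by tracking the effect of $p\mapsto p'$ on the sequence $b$ (a global shift by $-1$ in the non-monic case, and a shift of indices together with a shift by $-1$ in the monic case), which yields that $\ad(p)-1$ equals the least value attained at a strict descent, namely $k-1=\min\{\,b_j : b_j<\max_{i<j}b_i\,\}$. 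Granting this, once the running maximum reaches $k-1$ it can never fall below $k-1$ again (a later value below $k-1$ would be a descent of value $<k-1$), giving $b_i\ge k-1$ for all $i\ge h$; and for $i<h$ any non-increase of $b$ would be a descent of value $\le k-2$, which is impossible, so $b$ is strictly increasing up to $h$. The same two facts can instead be proved by induction on $k$ through the reduction $p\mapsto p'$, which is the route closest to Proposition \ref{admissibility min k,i} and to \cite[Lemma 42]{bras2006patterns}; existence of an index with $b_j=k-1$ can alternatively be extracted by applying Corollary \ref{admissibility 1 exist bi = 0} to $p^{(k-1)}$.

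With these two facts the three required properties follow at once: the coefficients of $H_p$ are $a_1,\dots,a_{h-1}>0$ together with $(k-1)-b_{h-1}\ge 1>0$ (as $b_{h-1}\le k-2$); the partial sums of $C_p$ equal $b_i-(k-1)\ge 0$ for $h\le i\le t$, so $C_p$ is admissible with coefficient sum $0$; and the partial sums of $T_p$ equal $b_i-(k-1)$ for $i>t$, where $b_i\ge k-1$ and $b_i\ne k-1$ by maximality of $t$, hence $b_i\ge k$ and these sums are $\ge 1$, giving $\ad(T_p)>1$.

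Finally, for uniqueness I would show both breakpoints are forced. In any decomposition of the stated form with breakpoints $\tilde h,\tilde t$, requiring $H_p$ to have positive coefficients summing to $k-1$ forces the split coefficient $(k-1)-b_{\tilde h-1}>0$, i.e. $b_{\tilde h-1}<k-1$, while admissibility of $C_p$ forces its leading partial sum $b_{\tilde h}-(k-1)\ge 0$; together these give $\tilde h=\min\{i:b_i\ge k-1\}=h$, and the splitting of the coefficient of $x_h$ is thereby determined. Likewise the coefficient sum of $C_p$ being $0$ forces $b_{\tilde t}=k-1$, and strong admissibility of $T_p$ forbids any index $i>\tilde t$ with $b_i=k-1$, so $\tilde t=\max\{i:b_i=k-1\}=t$. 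This pins down $H_p,C_p,T_p$ uniquely and completes the proof.
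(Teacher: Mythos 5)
Your proof is correct, but it takes a genuinely different route from the paper's. The paper argues ``top down'': writing $\ad(p)=k+1$, it peels off the head by iterating $p\mapsto p'$ exactly $k$ times (each iteration removes one variable with coefficient $1$, so $p=H_p+p^{(k)}$ with $H_p$ having positive coefficients summing to $k$), and then locates $t$ by applying Corollary \ref{admissibility 1 exist bi = 0} to $p^{(k)}$, which has admissibility degree $1$, taking the largest index with vanishing partial sum. You instead work ``bottom up'' on the sequence $b_0,b_1,\dots,b_n$ and read the breakpoints off a descent characterization of the admissibility degree, $\ad(p)-1=\min\{b_j:b_j<\max_{i<j}b_i\}$. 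That characterization is correct: under $p\mapsto p'$ every partial sum drops by $1$ (with an index shift in the monic case), so the set of descents is preserved with values lowered by $1$, and the base case is a descent of value $0$ forcing non-admissibility of $p'$; from it your two structural facts --- strict increase of $b$ up to $h$, and $b_i\ge\ad(p)-1$ for all $i\ge h$ --- follow exactly as you say, and you are right that Proposition \ref{admissibility min k,i} alone is not sharp enough to exclude dips below $k-1$ before index $k-1$. The trade-off is that the paper gets the head's positivity essentially for free from the definition of $p'$ but is terse about why the peeled-off terms assemble as claimed, whereas you isolate a clean reusable invariant of the $b$-sequence but leave its proof as a sketch; since that claim carries your whole argument, it is the one step you should write out in full. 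Your uniqueness argument is more explicit than the paper's, and your verification of the properties of $C_p$ and $T_p$ matches the paper's; both treatments share the same harmless ambiguity about the coefficient of the shared variable $x_h$ possibly vanishing in $C_p$.
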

	\begin{proof}
		Set $\ad(p) = k+1$, then $p$ can be written uniquely as the sum
		\[ p(x_1,\dots,x_n) = H_p(x_1,\dots,x_h) + p^{(k)}(x_h,\dots,x_n) \]
		where $H_p$ is a pattern with positive coefficients and their sum is equal to $k = \ad(p)-1$, and $p^{(k)}$ is admissible with $\ad(p^{(k)}) = 1$. If $a_i'$ are the coefficients of $p^{(k)}$, by Corollary \ref{admissibility 1 exist bi = 0} there exists an integer $i$ such that $\sum_{j=h}^i a_j' = 0$, set $t$ to be the largest of such integers. Set
		\[ C_p(x_h,\dots,x_t) = \sum_{i=h}^t a_i'x_i, \quad T_p(x_{t+1},\dots,x_n) = \sum_{i=t+1}^{n} a_i'x_i. \]
		By the choice of $t$ it follows $\sum_{i=t+1}^m a_i' = \sum_{i=h}^m a_i' > 0$ for all $m \in \{t+1, \dots, n\}$, hence $\ad(T_p) > 1$.
	\end{proof}
	\noindent
	If the pattern $p$ has admissibilty degree $\infty$, we set $H_p = p$ and $C_p = T_p = 0$. Therefore, we can write every pattern as
	\begin{equation}\label{eq:H+C+T}
		p(x_1,\dots,x_n) = H_p(x_1,\dots,x_h) + C_p(x_{h},\dots,x_t) + T_p(x_{t+1},\dots,x_n)
	\end{equation}
	we will keep this notation throughout.
	
	\begin{definition}
		Let $p$ be a pattern. With the notation of Lemma \ref{Head Center Tail} we call $H_p$ the \emph{head}, $C_p$ the \emph{center} and $T_p$ the \emph{tail} of $p$. The decomposition (\ref{eq:H+C+T}) is the \emph{standard decomposition} of $p$.
	\end{definition}
	
	\begin{example}
		Let $p = x_1 + 3 x_2 + x_3 -2 x_4 + x_5 + x_6$, the admissibility degree of $p$ is $4$, the standard decomposition of $p$ is
		\begin{align*}
			H_p(x_1,x_2) &= x_1 + 2 x_2, \\
			C_p(x_2,x_3,x_4) &= x_2 + x_3 - 2 x_4, \\
			T_p(x_5,x_6) &= x_5+x_6.
		\end{align*}
	\end{example}
	
	\begin{corollary}
		Any non-zero strongly admissible pattern $p$ can be decomposed into the sum
		\[ p = p_1 + q_1 + p_2 + q_2 + \dots + p_m + q_m, \]
		where the coefficients of the pattern $p_i$ are positive, the pattern $q_i$ is admissible and the sum of its coefficients is zero, for all $i \in \{ 1,\dots,m \}$.
	\end{corollary}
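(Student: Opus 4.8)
The plan is to iterate the standard decomposition of Lemma \ref{Head Center Tail}, peeling off one head and one center at a time and recursing on the tail. Since $p$ is strongly admissible we have $\ad(p) \geq 2$, so whenever $\ad(p)$ is finite the head $H_p$ has coefficient sum $\ad(p)-1 \geq 1$; in particular $H_p \neq 0$ and all of its coefficients are positive. I would therefore set $p_1 = H_p$ and $q_1 = C_p$, the latter being admissible with zero coefficient sum by the lemma. What remains is the tail $T_p$, which satisfies $\ad(T_p) > 1$, i.e.\ is again strongly admissible (or is $0$), and to which the same procedure can be applied.

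The decomposition is then completed by induction on the length $n$ of $p$. The base cases are $T_p = 0$, where we stop with $m = 1$, and $\ad(p) = \infty$, where the convention $H_p = p$, $C_p = T_p = 0$ lets us take $p_1 = p$ and $q_1 = 0$, using that the zero pattern is admissible with empty (hence zero) coefficient sum. In the inductive step $T_p$ is a strongly admissible pattern in the variables $x_{t+1},\dots,x_n$, so it has length $n-t$; since $H_p \neq 0$ forces $h \geq 1$ and hence $t \geq h \geq 1$, this length is strictly smaller than $n$. Applying the induction hypothesis to $T_p$ yields $T_p = p_2 + q_2 + \dots + p_m + q_m$ with the prescribed signs and coefficient sums, and adding $p_1 + q_1$ recovers $p$ because the whole construction is a genuine polynomial sum.

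The one point that needs a separate check is that in the case $\ad(p) = \infty$ the pattern $p = H_p$ really does have all positive coefficients, since Lemma \ref{Head Center Tail} only asserts this for finite admissibility degree. This follows from a short induction of the same flavour as Proposition \ref{admissibility min k,i}: if $a_1 = 1$ then $\ad(p') = \infty$ with $p'$ having coefficients $a_2,\dots,a_n$, while if $a_1 > 1$ then $p' = p - x_1$ has $\ad(p') = \infty$, and in either case the inductive hypothesis gives positivity of the remaining coefficients. Apart from this bookkeeping of edge cases, there is no real obstacle: the strict decrease of the length guarantees termination, and everything else is immediate from the standard decomposition, so the substantive work has already been carried out in Lemma \ref{Head Center Tail}.
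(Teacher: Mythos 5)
Your proof is correct and takes exactly the paper's route: the paper's entire proof reads ``It follows by recursively applying Lemma~\ref{Head Center Tail} on the tail of $p$,'' which is precisely your induction on the length via the standard decomposition. The additional care you take with the edge cases (that $H_p\neq 0$ when $\ad(p)\geq 2$ is finite, and that a pattern with $\ad(p)=\infty$ has all coefficients positive) is sound and merely fills in details the paper leaves implicit.
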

	\begin{proof}
		It follows by recursively applying Lemma \ref{Head Center Tail} on the tail of $p$.
	\end{proof}
	
	\begin{remark}\label{centered patterns}
		Note that the head of every pattern of admissibility degree $1$ is zero. Further, if $p$ is an admissible pattern in which the sum of all coefficients is zero (i.e. $b_n=0$), the tail of $p$ is zero. In addition, by Proposition \ref{admissibility min k,i}, the admissibility degree of $p$ is $1$, so the head of $p$ is also zero, consequently $p$ is equal to its center. Therefore, an admissible pattern is equal to its center if and only if the sum of all its coefficients is equal to zero.
	\end{remark}
	
	The next result follows a similar idea of \cite[Proposition 2.4]{sun2017generalizing}. 
	
	\begin{proposition}\label{admissibility 1}
		Let $p$ be an admissible pattern such that the sum of its coefficients is zero. A numerical semigroup $S$ admits $p$ if and only if the monoid generated by the integers $b_1,\dots,b_n$ is a subset of $S$.
	\end{proposition}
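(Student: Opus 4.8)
The plan is to run everything through the decomposition recorded in the preliminaries,
\[ p(x_1,\dots,x_n) = b_1(x_1-x_2)+\dots+b_{n-1}(x_{n-1}-x_n)+b_n x_n. \]
Since by hypothesis the sum of the coefficients of $p$ vanishes, we have $b_n = \sum_{j=1}^n a_j = 0$, so the last summand disappears and $p(x_1,\dots,x_n) = \sum_{i=1}^{n-1} b_i(x_i-x_{i+1})$. Moreover $p$ is admissible, so Proposition \ref{admissible characteriazation} gives $b_i \geq 0$ for all $i$. These two facts, $b_n=0$ and non-negativity of the $b_i$, are the whole engine of the argument, and I would isolate them at the start.

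For the implication that the monoid containment implies admissibility, I would take any non-increasing chain $s_1 \geq s_2 \geq \dots \geq s_n$ of elements of $S$ and simply read off $p(s_1,\dots,s_n) = \sum_{i=1}^{n-1} b_i(s_i-s_{i+1})$. Each difference $s_i-s_{i+1}$ is a non-negative integer and each $b_i\geq 0$, so this value is a non-negative integer combination of $b_1,\dots,b_n$, hence lies in the monoid $\langle b_1,\dots,b_n\rangle$ they generate. If that monoid is contained in $S$, we immediately conclude $p(s_1,\dots,s_n)\in S$, i.e.\ $S$ admits $p$. I expect no genuine difficulty here.

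For the converse, the task is, for each fixed $i$, to exhibit an admissible input on which $p$ takes the value $b_i$ exactly; then admissibility forces $b_i \in S$, and the monoid containment follows since $S$ is itself a monoid. The natural idea is to kill every difference $x_j-x_{j+1}$ except the $i$-th, which should equal $1$, so that $p$ collapses to $b_i\cdot 1 = b_i$. This requires two consecutive integers of $S$, and this is the one place where I expect real care to be needed: I would choose $c \geq \operatorname{c}(S)$, so that both $c$ and $c+1$ belong to $S$, and set $x_1 = \dots = x_i = c+1$ and $x_{i+1} = \dots = x_n = c$. This is a non-increasing sequence in $S$, so $p$ evaluated on it lies in $S$ and equals $b_i$, giving $b_i \in S$ for every $i$ and hence $\langle b_1,\dots,b_n\rangle \subseteq S$. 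The main (indeed only) obstacle is thus the construction of these inputs via consecutive integers beyond the conductor; once that trick is in place both directions are routine.
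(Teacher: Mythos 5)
Your proposal is correct and follows essentially the same route as the paper: the sufficiency direction uses the decomposition $p=\sum_i b_i(x_i-x_{i+1})+b_nx_n$ with $b_n=0$ and $b_i\geq 0$, and the necessity direction evaluates $p$ on an input consisting of two consecutive elements of $S$ (the paper takes any $\lambda$ with $\lambda,\lambda+1\in S$; you take $c\geq \operatorname{c}(S)$, which is the same trick) to extract each $b_i$. No gaps.
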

	\begin{proof}
		\emph{Necessity}. Let $i \in \{1,\dots,n\}$ and $\lambda \in \mathbb{N}$ such that $\lambda,\lambda+1 \in S$. Then
		\[ p(\underbrace{\lambda+1,\dots,\lambda+1}_{i },\lambda,\dots,\lambda) = \sum_{j=1}^n a_j \lambda + \sum_{j=1}^i a_j = b_n \lambda + b_i = b_i \in S. \]
		\emph{Sufficiency}. It is enough to write
		\[
			\begin{split}
				p(x_1,\dots,x_n) & = a_1x_1+\dots+a_nx_n = \\
				& = b_1(x_1-x_2)+\dots+b_{n-1}(x_{n-1}-x_n)+b_nx_n. \qedhere
			\end{split}
		\]
	\end{proof}
	
	\begin{proposition}\label{admissibility 1 p iff Cp and Tp}
		If $p$ has admissibility degree $1$, then a numerical semigroup $S$ admits $p$ if and only if it admits $C_p$ and $T_p$.
	\end{proposition}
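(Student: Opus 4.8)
The plan is to exploit the standard decomposition of $p$. Since $\ad(p)=1$, Remark \ref{centered patterns} gives $H_p=0$, so by Lemma \ref{Head Center Tail} I may write $p = C_p + T_p$ with $C_p(x_1,\dots,x_t)=\sum_{i=1}^t a_i x_i$, $T_p(x_{t+1},\dots,x_n)=\sum_{i=t+1}^n a_i x_i$, and $b_t=\sum_{i=1}^t a_i = 0$. I would then prove the two implications separately, using throughout that $S$ is a submonoid of $\mathbb{N}$, so $0 \in S$ and $S$ is closed under addition.

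For sufficiency, assume $S$ admits both $C_p$ and $T_p$ and take $s_1\geq\dots\geq s_n$ in $S$. Since $s_1\geq\dots\geq s_t$ and $s_{t+1}\geq\dots\geq s_n$ are decreasing chains in $S$, both $C_p(s_1,\dots,s_t)$ and $T_p(s_{t+1},\dots,s_n)$ lie in $S$; their sum is $p(s_1,\dots,s_n)$, which is therefore in $S$ by additivity.

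The substantive direction is necessity, and the idea is to recover $C_p$ and $T_p$ as specializations of $p$, using homogeneity. To obtain $C_p$, given a chain $y_1\geq\dots\geq y_t$ in $S$ I would pad it with $n-t$ trailing zeros: the sequence $y_1,\dots,y_t,0,\dots,0$ is still decreasing and lies in $S$, and since $T_p$ is homogeneous $T_p(0,\dots,0)=0$, whence $p(y_1,\dots,y_t,0,\dots,0)=C_p(y_1,\dots,y_t)\in S$. To obtain $T_p$, given a chain $z_{t+1}\geq\dots\geq z_n$ in $S$ I would instead prepend $t$ copies of the largest tail entry $z_{t+1}$: the sequence $z_{t+1},\dots,z_{t+1},z_{t+1},z_{t+2},\dots,z_n$ is decreasing and lies in $S$, and the key point is that $C_p$ evaluated at a constant vanishes, $C_p(z_{t+1},\dots,z_{t+1})=z_{t+1}\,b_t=0$, so $p$ of this sequence equals $T_p(z_{t+1},\dots,z_n)\in S$.

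The only place requiring care, and the crux of the argument, is this last specialization: the vanishing $C_p(c,\dots,c)=0$ is exactly what the hypothesis $b_t=0$ buys us, and it is what allows the constant padding to collapse the center while leaving the tail untouched. Everything else is bookkeeping about the monotonicity of the padded sequences and the additive closure of $S$.
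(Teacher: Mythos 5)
Your proof is correct and follows essentially the same route as the paper: sufficiency from $p=C_p+T_p$ (using $H_p=0$ when $\ad(p)=1$), and necessity by the two specializations $p(y_1,\dots,y_t,0,\dots,0)=C_p(y_1,\dots,y_t)$ and $p(z_{t+1},\dots,z_{t+1},z_{t+2},\dots,z_n)=T_p(z_{t+1},\dots,z_n)$, the latter collapsing the center via $b_t=0$. You simply spell out the monotonicity and vanishing checks that the paper leaves implicit.
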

	\begin{proof}
		Sufficiency follows from $p = C_p+T_p$. For the necessity it is enough to write
		\begin{gather*}
			p(x_1,\dots,x_t,0,\dots,0) = C_p(x_1,\dots,x_t), \\
			p(x_{t+1},\dots,x_{t+1},x_{t+2},\dots,x_n) = T_p(x_{t+1},\dots,x_n),
		\end{gather*}
		where $t$ is the same index used in the proof of Lemma \ref{Head Center Tail}.
	\end{proof}
	
	\begin{corollary}\label{structure admissibility 1}
		If $p$ has admissibility degree $1$, then a numerical semigroup $S$ admits $p$ if and only if $S$ admits $T_p$ and contains the monoid generated by $b_1,\dots,b_t$.
	\end{corollary}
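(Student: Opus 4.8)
The plan is to combine the two preceding propositions: Proposition \ref{admissibility 1 p iff Cp and Tp}, which reduces admitting $p$ to admitting its center $C_p$ and its tail $T_p$, and Proposition \ref{admissibility 1}, which characterises admissibility for a pattern whose coefficients sum to zero. The corollary should fall out by feeding the second characterisation into the first.

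First I would pin down the standard decomposition of $p$ in the case $\ad(p) = 1$. Since the coefficients of the head $H_p$ are positive and sum to $\ad(p) - 1 = 0$, the head must vanish, $H_p = 0$. Consequently we may take $h = 1$, so that $p = C_p + T_p$ and the first $t$ coefficients $a_1,\dots,a_t$ of $p$ are exactly the coefficients of the center $C_p(x_1,\dots,x_t)$. In particular the partial sums $b_1,\dots,b_t$ of $p$ are precisely the partial sums of the coefficients of $C_p$, and $b_t = 0$ because the coefficients of the center sum to zero (Remark \ref{centered patterns}).

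Next I would invoke Proposition \ref{admissibility 1 p iff Cp and Tp}: the semigroup $S$ admits $p$ if and only if it admits both $C_p$ and $T_p$. Since $C_p$ is admissible and the sum of its coefficients is zero, Proposition \ref{admissibility 1} applies verbatim to $C_p$, giving that $S$ admits $C_p$ if and only if the monoid generated by its partial sums $b_1,\dots,b_t$ is contained in $S$. Substituting this equivalence into the previous one yields exactly the stated characterisation.

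The only genuinely delicate point is the index bookkeeping: one must verify that the partial sums of $C_p(x_h,\dots,x_t)$ really are $b_1,\dots,b_t$ and not a shifted list. This is precisely where $H_p = 0$, and hence $h = 1$, is essential — if the center began at some $x_h$ with $h > 1$, its partial sums would not coincide with the $b_i$ attached to $p$, and Proposition \ref{admissibility 1} would produce the wrong generators. Once that alignment is checked, the remainder is a direct composition of the two cited propositions.
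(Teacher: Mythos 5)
Your argument is correct and is exactly the derivation the paper intends (the corollary is stated without proof as an immediate consequence of Proposition \ref{admissibility 1 p iff Cp and Tp} and Proposition \ref{admissibility 1}). The index check that $H_p=0$ forces the center to start at $x_1$, so that its partial sums are literally $b_1,\dots,b_t$, is the only point requiring care, and you handle it correctly.
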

	
	By iterating on the tail, the previous Corollary \ref{structure admissibility 1} with \cite[Lemma 14]{bras2006patterns} gives us an algorithm to determine if a numerical semigroup admits an admissible pattern. Further, the previous result allows us to extend Proposition \ref{frobenius varieties} to (not necessarily strongly) admissible patterns.

	\begin{proposition}\label{structure admissibility 2 monic}
		If $p$ is monic and has admissibility degree $2$ with
		\[ p(x_1,\dots,x_n) = x_1 + C_p(x_2,\dots,x_t)+T_p(x_{t+1},\dots,x_n), \]
		then $S$ admits $p$ if and only if it admits $p_i(x_1,x_2,x_3) = x_1+(b_i-1)(x_2-x_3)$ for all $i \in \{2,\dots,n\}$, and $x_1+T_p$.
	\end{proposition}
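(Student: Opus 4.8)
The plan is to prove both implications by exploiting the difference decomposition $p = \sum_{i=1}^{n-1} b_i(x_i - x_{i+1}) + b_n x_n$ together with the structural facts forced by the hypotheses. First I would record the reductions coming from $p$ being monic with $\ad(p) = 2$: the head is exactly $x_1$, so $b_1 = 1$; since $C_p$ has coefficient sum zero and $b_1 = 1$, telescoping inside the center gives $b_t = 1$ and
\[ C_p(x_2, \dots, x_t) = \sum_{i=2}^{t-1} (b_i - 1)(x_i - x_{i+1}), \]
while admissibility of $C_p$ (Proposition \ref{admissible characteriazation}) gives $C_p(s_2, \dots, s_t) \geq 0$ whenever $s_2 \geq \dots \geq s_t$. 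These identities are what tie the coefficients $b_i - 1$ appearing in the patterns $p_i$ to the center of $p$.

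For sufficiency, assume $S$ admits every $p_i$ and $x_1 + T_p$, and fix $s_1 \geq \dots \geq s_n$ in $S$. I would first show $u := s_1 + C_p(s_2, \dots, s_t) \in S$ by a telescoping induction: set $w_1 = s_1$ and $w_j = w_{j-1} + (b_j - 1)(s_j - s_{j+1})$ for $j = 2, \dots, t-1$; since each $w_{j-1} \geq s_1 \geq s_j \geq s_{j+1}$, admitting $p_j$ yields $w_j = p_j(w_{j-1}, s_j, s_{j+1}) \in S$, and $w_{t-1} = u$ by the identity above. Then, because $u \geq s_1 \geq s_{t+1} \geq \dots \geq s_n$, admitting $x_1 + T_p$ with leading argument $u$ gives $u + T_p(s_{t+1}, \dots, s_n) = p(s_1, \dots, s_n) \in S$.

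For necessity, assume $S$ admits $p$. To recover $x_1 + T_p$ I would specialize $x_1 = \dots = x_t$ to a common top value and leave the tail variables free; since the head plus center evaluate to that value (using $b_t = 1$ and that the center has coefficient sum zero), what remains is exactly $x_1 + T_p$, which is the same collapsing device as in Proposition \ref{admissibility 1 p iff Cp and Tp}. To recover a single $p_i$ I would feed $p$ a descending three-step profile: value $s_1$ on the prefix where the partial sum equals $1$ (position $1$), value $s_2$ on the block ending at position $i$ (where the partial sum is $b_i$), value $s_3$ on the block ending at position $t$ (where the partial sum returns to $1$), and value $0$ afterwards. Reading off coefficients via $b_1 = 1$, $b_i$, and $b_t = 1$ produces exactly $s_1 + (b_i - 1)(s_2 - s_3) = p_i(s_1, s_2, s_3)$, with the ordering $s_1 \geq s_2 \geq s_3 \geq 0$ respected.

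The step I expect to be the main obstacle is this last specialization. The coefficient of the value $s_3$ in any monotone specialization is a difference $b_r - b_i$ of partial sums, and to make it equal to the required $-(b_i - 1)$ one needs a position $r > i$ at which the partial sum drops back to $1$; this is supplied by $b_t = 1$, which is precisely the feature localizing the nontrivial patterns $p_i$ to the center (indices up to $t$), the tail being instead absorbed into $x_1 + T_p$. I would therefore take care to verify that setting the trailing variables to $0$ neither destroys the $s_3$-dependence nor introduces spurious terms, and to treat the degenerate index $i = t$ (where $b_t - 1 = 0$ makes $p_t = x_1$ trivial) separately.
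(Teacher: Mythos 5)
Your proof is correct and follows essentially the same route as the paper's: the same rewriting of the centre as $\sum_{i}(b_i-1)(x_i-x_{i+1})$, the same monotone block specializations ($x_1$, then $x_2$ up to position $i$, then $x_3$ up to position $t$, then $0$) for necessity, and the same telescoping iteration of the patterns $p_i$ followed by one application of $x_1+T_p$ for sufficiency. The only place where you are more cautious than the paper is the necessity of $p_i$ for indices $i>t$, which the paper's substitution likewise does not literally cover.
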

	\begin{proof}
		First, write
		\[ p(x_1,\dots,x_n) = x_1 + \sum_{i=2}^t (b_i-1)(x_i-x_{i+1})+T_p(x_{t+1},\dots,x_n). \]
		\emph{Necessity}. Let $i \in \{2,\dots,n\}$, we have
		\begin{gather*}
			p(x_1,\underbrace{x_2,\dots,x_2}_{i-1 },\underbrace{x_3,\dots,x_3}_{t-i },0,\dots,0) = x_1+(b_i-1)(x_2-x_3), \\
			p(\underbrace{x_1,\dots,x_1}_{t },x_{t+1},\dots,x_n) = x_1 + T_p(x_{t+1},\dots,x_n).
		\end{gather*}
		\emph{Sufficiency}. Let $\lambda_1,\dots,\lambda_n \in S$ with $\lambda_1\geq \dots \lambda_n$. We can write
		\[ p(\lambda_1,\dots,\lambda_t,0,\dots,0) = \lambda_1 + \sum_{i=2}^t (b_i-1)(\lambda_i-\lambda_{i+1}). \]
		By hypothesis $\lambda_1 + (b_2-1)(\lambda_2-\lambda_3) \in S$ and it is greater than $\lambda_1$. Thus also $\Big( \lambda_1 + (b_2-1)(\lambda_2-\lambda_3) \Big) + (b_3-1)(\lambda_3-\lambda_4) \in S$. By iterating this process we obtain $p(\lambda_1,\dots,\lambda_t,0,\dots,0) \in S$ and it is greater than $\lambda_1$. Finally, since $S$ admits $x_1+T_p$, we have
		\[ p(\lambda_1,\dots,\lambda_n) = p(\lambda_1,\dots,\lambda_t,0,\dots,0)+T_p(\lambda_{t+1},\dots,\lambda_n) \in S. \qedhere \]
	\end{proof}
	
	\section{Patterns equivalent to the Arf pattern}\label{Section 3}
	
	The next result is a straightforward generalization of \cite[Proposition 34]{bras2006patterns}.
	
	\begin{lemma}\label{conductor / multiplicity}
		A numerical semigroup $S$ admits every pattern of admissibility degree greater or equal than $\lceil \frac{\operatorname{c}(S)}{\operatorname{m}(S)} \rceil +1$.
	\end{lemma}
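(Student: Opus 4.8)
The plan is to fix an admissible pattern $p$ with $\ad(p)\ge k+1$, where $k=\lceil \operatorname{c}(S)/\operatorname{m}(S)\rceil$, and to show that $p(s_1,\dots,s_n)\in S$ for every $s_1\ge\dots\ge s_n$ in $S$. Writing $m=\operatorname{m}(S)$ and $c=\operatorname{c}(S)$, note first that by Proposition~\ref{admissible characteriazation} every admissible pattern has $b_i\ge 0$, so, using the decomposition $p=\sum_{i}b_i(x_i-x_{i+1})+b_nx_n$, the value $v:=p(s_1,\dots,s_n)$ is a non-negative integer. If $v\ge c$ then $v\in S$ by definition of the conductor, so the whole difficulty lies in the range $0\le v<c$, where $v$ may well be a gap of $S$ and size alone is useless; here I must exploit the additive structure of $S$.

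The main tool I would use is a peeling identity read off directly from the definition of $p'$. If $P$ is admissible (so its leading coefficient $a_1=b_1\ge 1$) and $\sigma_1\ge\dots\ge\sigma_r$ lie in $S$, then
\[ P(\sigma_1,\dots,\sigma_r)=\sigma_1+P'(\vec\sigma'), \]
where $\vec\sigma'=(\sigma_1,\dots,\sigma_r)$ if $a_1>1$ (using $P'=P-x_1$) and $\vec\sigma'=(\sigma_2,\dots,\sigma_r)$ if $a_1=1$ (using $P'=P(0,x_1,\dots,x_{r-1})$); in either case $\vec\sigma'$ is again weakly decreasing with entries in $S$.

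With this I would establish the auxiliary claim $(\star)$: for every admissible pattern $P$ and every weakly decreasing tuple $\vec\sigma$ in $S$ with $P(\vec\sigma)\le(\ad(P)-1)m$, one has $P(\vec\sigma)\in S$. I prove $(\star)$ by strong induction on the non-negative integer $P(\vec\sigma)$. If $P(\vec\sigma)=0$ then $0\in S$. Otherwise $P(\vec\sigma)>0$ forces $\ad(P)\ge 2$ (since $\ad(P)=1$ would give $P(\vec\sigma)\le 0$), so $P'$ is admissible with $\ad(P')=\ad(P)-1$; also $\vec\sigma\ne 0$, whence its largest entry satisfies $\sigma_1\ge m$. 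The peeling identity gives $P(\vec\sigma)=\sigma_1+P'(\vec\sigma')$ with $P'(\vec\sigma')=P(\vec\sigma)-\sigma_1<P(\vec\sigma)$ and
\[ P'(\vec\sigma')=P(\vec\sigma)-\sigma_1\le(\ad(P)-1)m-m=(\ad(P')-1)m, \]
so the inductive hypothesis yields $P'(\vec\sigma')\in S$, and therefore $P(\vec\sigma)=\sigma_1+P'(\vec\sigma')\in S$ since $\sigma_1\in S$.

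Finally I would combine $(\star)$ with the conductor: if $v\ge c$ we are done, while if $v<c$ then $v\le c-1<c\le km\le(\ad(p)-1)m$, so $(\star)$ applies to $p$ and $(s_1,\dots,s_n)$ and gives $v\in S$; hence $S$ admits $p$. The one genuinely delicate point is the bookkeeping inside $(\star)$: each peeling step removes from the value an element of $S$ of size at least $m$ while lowering the admissibility degree by exactly one, so exhausting a value below $c$ takes fewer than $c/m\le k\le\ad(p)-1$ steps, which is exactly what keeps every intermediate pattern admissible. I expect this balance --- that the number of peelings never outruns the available admissibility degree --- to be the heart of the matter.
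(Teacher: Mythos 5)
Your argument is correct, but it proceeds differently from the paper's. The paper applies the standard decomposition of Lemma~\ref{Head Center Tail}, $p=H_p+C_p+T_p$, and splits into two cases: if $s_{h+1}<\operatorname{m}(S)$ then all tail variables vanish and $p(s_1,\dots,s_n)=\sum_{i=1}^h a_is_i$ is a non-negative integer combination of elements of $S$ (the head has positive coefficients), while if $s_{h+1}\geq\operatorname{m}(S)$ then $p(s_1,\dots,s_n)\geq H_p(s_1,\dots,s_h)\geq(\ad(p)-1)\operatorname{m}(S)\geq\operatorname{c}(S)$. You instead bypass the head/center/tail decomposition entirely and run a strong induction on the value $P(\vec\sigma)$, using the peeling identity $P(\vec\sigma)=\sigma_1+P'(\vec\sigma')$ together with the invariant $P(\vec\sigma)\leq(\ad(P)-1)\operatorname{m}(S)$; each step subtracts an element of $S$ of size at least $\operatorname{m}(S)$ and drops the admissibility degree by exactly one, and the invariant guarantees the degree never runs out before the value reaches zero. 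All the details check out: $a_1=b_1\geq 1$ for an admissible pattern so the peeling identity is always available, $\vec\sigma'$ stays weakly decreasing in $S$, $P'(\vec\sigma')\geq 0$ by Proposition~\ref{admissible characteriazation}, and the hypothesis $P(\vec\sigma)\leq(\ad(P)-1)\operatorname{m}(S)$ correctly forces $\ad(P)\geq 2$ when $P(\vec\sigma)>0$. Your version is more self-contained (it needs only Proposition~\ref{admissible characteriazation} and the recursive definition of $p'$, not Lemma~\ref{Head Center Tail}), at the cost of an explicit induction; the paper's version is shorter given the decomposition machinery already established, and makes transparent that it is precisely the head that supplies the $\ad(p)-1$ summands of size at least $\operatorname{m}(S)$.
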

	\begin{proof}
		Write
		\[ p(x_1,\dots,x_n) = H_p(x_1,\dots,x_h) + C_p(x_{h+1},\dots,x_t) + T_p(x_{t+1},\dots,x_n), \]
		and recall that the coefficients of $H_p$ are positive and their sum is equal to $\ad(p)-1 \geq \lceil \frac{\operatorname{c}(S)}{\operatorname{m}(S)} \rceil$.
		Let $s_1,\dots,s_n \in S$ with $s_1 \geq \dots \geq s_n$. If $s_{h+1} < \operatorname{m}(S)$, then $s_{h+1} = s_{h+2} = \dots = s_n = 0$ and
		\[ p(s_1,\dots,s_n) = \sum_{i=1}^h a_is_i \in S. \]
		On the other hand, if $s_{h+1} \geq \operatorname{m}(S)$, then $s_1 \geq \dots s_h \geq \operatorname{m}(S)$, therefore
		\[ \begin{split}
			p(s_1,\dots,s_n) \geq H_p(s_1,\dots,s_h)  & \geq (\ad(p)-1)\operatorname{m}(S) \geq \\
			& \geq \left\lceil \frac{\operatorname{c}(S)}{\operatorname{m}(S)} \right\rceil \operatorname{m}(S) \geq \operatorname{c}(S). \qedhere
		\end{split} \]
	\end{proof}
	
	
	\begin{proposition}
		If $p$ has admissibility degree $k$, then there exists a numerical semigroup $S$ that admits every pattern of admissibility degree $k+1$ but it does not admit $p$.
	\end{proposition}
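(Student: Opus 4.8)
The plan is to produce the witness semigroup $S$ by forcing a single value of $p$ into a gap, and to let Lemma~\ref{conductor / multiplicity} take care of everything else. Assume $\ad(p)=k$ is finite (otherwise ``degree $k+1$'' is meaningless; equivalently $p$ has a negative coefficient). By Lemma~\ref{conductor / multiplicity} it suffices to construct a numerical semigroup $S$ with $\lceil \operatorname{c}(S)/\operatorname{m}(S)\rceil\le k$ that does \emph{not} admit $p$: such an $S$ automatically admits every pattern of admissibility degree at least $k+1$, in particular every pattern of admissibility degree $k+1$. I would evaluate $p$ on a two-level staircase: a direct computation generalizing the one in the proof of Proposition~\ref{admissibility 1} gives $p(\underbrace{m+1,\dots,m+1}_{r},m,0,\dots,0)=m\,b_{r+1}+b_r$, and the goal is to make this value a gap of $S$.

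The main step, and the hard part, is to locate indices $r<q=r+1$ with $b_q=k-1$ and $b_r>k-1$. Here is the mechanism I would use. Applying $'$ lowers every surviving partial sum by $1$, while a shift (the case $a_1=1$) additionally deletes the current leading index; hence after $s$ reductions the surviving original indices form a final segment $\{\sigma_s+1,\dots,n\}$, where $\sigma_s$ is the number of shifts performed, and the partial sum at a surviving index $i$ equals $b_i-s$. Admissibility of $p^{(k-1)}$ then gives $b_i\ge k-1$ for all $i>\sigma_{k-1}$, while non-admissibility of $p^{(k)}$ produces an index $i_0>\sigma_k\ge\sigma_{k-1}$ with $b_{i_0}\le k-1$, forcing $b_{i_0}=k-1$. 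Since the leading coefficient of $p^{(k-1)}$ is nonzero we have $b_{\sigma_{k-1}+1}\neq k-1$, so $i_0\ge\sigma_{k-1}+2$; in particular $i_0-1>\sigma_{k-1}$, giving $b_{i_0-1}\ge k-1$, and because $a_{i_0}\neq 0$ this inequality is strict, $b_{i_0-1}>k-1=b_{i_0}$. Setting $q=i_0$ and $r=i_0-1$ completes this step.

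Finally I would assemble $S$. Choose $m>\max_i b_i$, set $N=(k-1)m+b_r$, and define $S=\langle m,m+1\rangle\cup\{x\in\mathbb{N}:x\ge N+1\}$ (for $k=1$ take instead the half-line $\{0\}\cup\{x:x\ge m\}$, where $N=b_r\in(0,m)$ is already a gap). One checks that $S$ is a numerical semigroup with $\operatorname{m}(S)=m$. Since $k-1=b_q<b_r<m$, the number $N\equiv b_r\pmod m$ is not representable in $\langle m,m+1\rangle$ and is smaller than $N+1$, so $N\notin S$; as $N=p(\underbrace{m+1,\dots,m+1}_{r},m,0,\dots,0)$ with all arguments in $S$ and weakly decreasing, $S$ does not admit $p$. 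On the other hand $\operatorname{c}(S)=N+1\le km$, so $\lceil \operatorname{c}(S)/\operatorname{m}(S)\rceil=k$ and Lemma~\ref{conductor / multiplicity} shows $S$ admits every pattern of admissibility degree at least $k+1$. I expect the only delicate point to be the second paragraph—extracting from the reduction $p\mapsto p'$ a surviving partial sum equal to exactly $k-1$ that is entered by a strict descent; the construction of $S$ and the gap verification are then routine.
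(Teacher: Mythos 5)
Your proposal is correct and follows essentially the same route as the paper: both evaluate $p$ on a two-level staircase in $\{q,q+1\}$ (plus zeros) to force the value $(k-1)q+b_r$, with $k-1<b_r<q$, into a gap of $\langle q,q+1\rangle$ union a high half-line, and then invoke Lemma~\ref{conductor / multiplicity}; the paper merely extracts the index $r$ from the standard decomposition $H_p+C_p+T_p$ (some partial sum of the center is positive), whereas you re-derive the consecutive descent $b_r>b_{r+1}=k-1$ directly from the reduction $p\mapsto p'$, which is a correct, slightly more self-contained version of the same combinatorics. The only omission is the trivial case $k=0$, which the paper settles by taking $S=\mathbb{N}$ via Proposition~\ref{admissible characteriazation}.
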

	\begin{proof}
		If $k=0$ take $S = \mathbb{N}$. Assume $k \geq 1$. The sum of the coefficients of $C_p$ si zero, therefore we can write
		\[ p(x_1,\dots,x_n) = H_p(x_1,\dots,x_h) + \sum_{i=h+1}^{t}c_i(x_i-x_{i+1}) + T_p(x_{t+1},\dots,x_n) \]
		for some $c_i \in \mathbb{N}$. Note that there exists $r \in \{ h+1,\dots,t \}$ such that $c_r > 0$. Now let $q \in \mathbb{N}$ such that $q > c_r+k-1$. Set $S = \langle q , q+1 \rangle \cup (kq + \mathbb{N})$, then
		\[ p(\underbrace{q+1,\dots,q+1}_{r },\underbrace{q,\dots,q}_{t-r },0,\dots,0) = (k-1)(q+1)+c_r = \lambda, \]
		with $(k-1)q + k-1 < \lambda < kq$, therefore $\lambda \notin S$, so $S$ does not admit $p$. Nonetheless, since $\operatorname{c}(S) = kq = k\operatorname{m}(S)$, from the preceding lemma $S$ admits every pattern of admissibility degree $k+1$.
	\end{proof}
	
	\begin{corollary}\label{equivalent same degree}
		Let $p$ and $q$ be two patterns.
		\begin{enumerate}
			\item If $p$ induces $q$, then $\ad(p) \leq \ad(q)$.
			\item If $p$ and $q$ are equivalent, then $\ad(p) = \ad(q)$.
		\end{enumerate}
	\end{corollary}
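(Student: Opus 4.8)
The plan is to deduce both items directly from the Proposition immediately preceding the statement, which, for any pattern of admissibility degree $k$, manufactures a single numerical semigroup that separates it from all patterns of high enough admissibility degree. Statement (2) will then be a one-line consequence of (1): if $p$ and $q$ are equivalent they induce each other, so (1) applied in both directions gives $\ad(p) \leq \ad(q)$ and $\ad(q) \leq \ad(p)$, hence $\ad(p) = \ad(q)$. So the whole content is in (1).

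To prove (1) I would argue by contraposition, assuming $\ad(p) > \ad(q)$ and producing a semigroup in $\mathscr{S}(p) \setminus \mathscr{S}(q)$, which shows $\mathscr{S}(p) \not\subseteq \mathscr{S}(q)$, i.e.\ $p$ does not induce $q$. Note first that the hypothesis $\ad(p) > \ad(q)$ forces $\ad(q)$ to be a finite integer, say $\ad(q) = k$, with $\ad(p) \geq k+1$. I would then apply the preceding Proposition to the pattern $q$, obtaining a numerical semigroup $S$ that does not admit $q$ but admits every pattern of admissibility degree $k+1$. Since $\ad(p) \geq k+1$, the aim is to conclude that $S$ admits $p$, giving $S \in \mathscr{S}(p) \setminus \mathscr{S}(q)$ as required.

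The one delicate point, which I expect to be the main thing to get right, is the gap between \emph{degree exactly} $k+1$ and \emph{degree at least} $k+1$: read literally the Proposition only guarantees that $S$ admits patterns of degree $k+1$, whereas $\ad(p)$ may be strictly larger. This is resolved by looking back at the explicit construction in the Proposition's proof, where $S = \langle a, a+1 \rangle \cup (ka + \mathbb{N})$ for a suitable integer $a$, so that $\operatorname{c}(S) = ka = k\operatorname{m}(S)$ and therefore $\lceil \operatorname{c}(S)/\operatorname{m}(S) \rceil + 1 = k+1$. By Lemma \ref{conductor / multiplicity}, $S$ admits \emph{every} pattern of admissibility degree at least $k+1$, not merely those of degree exactly $k+1$; in particular it admits $p$. (The degenerate case $k=0$, where $q$ is not admissible and $S = \mathbb{N}$, is covered by the same reasoning, since $\mathbb{N}$ admits every admissible pattern and $\mathscr{S}(q) = \emptyset$.) Once this uniform admission at all degrees $\geq k+1$ is in hand, the contrapositive, and hence (1) and (2), follow immediately.
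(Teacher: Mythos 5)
Your proof is correct and is exactly the intended derivation: the paper states this corollary without proof as an immediate consequence of the preceding proposition, and your contrapositive argument (apply the proposition to $q$ with $k=\ad(q)$, then use Lemma \ref{conductor / multiplicity} to see that the constructed $S$ admits every pattern of degree \emph{at least} $k+1$, hence admits $p$) is the right way to fill it in. Your explicit handling of the ``exactly $k+1$ versus at least $k+1$'' point and of the degenerate case $k=0$ is careful and accurate.
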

	
	\begin{lemma}\label{Arf induces x1+n(x2-x3)}
		The Arf pattern induces the pattern $x_1 + n(x_2-x_3)$ for every $n \in \mathbb{N}$.
	\end{lemma}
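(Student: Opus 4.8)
The plan is to argue by induction on $n$, exploiting the defining property of an Arf semigroup by feeding the pattern's own output back into it. Recall that $\mathscr{S}(x_1+x_2-x_3)$ is exactly the family of numerical semigroups $S$ for which $s_1+s_2-s_3 \in S$ whenever $s_1 \geq s_2 \geq s_3$ lie in $S$, so it suffices to show that any such $S$ belongs to $\mathscr{S}(x_1+n(x_2-x_3))$ for every $n$.

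For the base cases I would observe that when $n=0$ the pattern reduces to $x_1$, which is admitted by every numerical semigroup since $x_1 \in S$, and that when $n=1$ the pattern is the Arf pattern itself, so the claim holds by hypothesis.

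For the inductive step, assume $S$ admits $x_1+(n-1)(x_2-x_3)$ and let $s_1 \geq s_2 \geq s_3$ be elements of $S$. The idea is to set $y = s_1 + (n-1)(s_2-s_3)$, which lies in $S$ by the inductive hypothesis, and to note that $(n-1)(s_2-s_3) \geq 0$ forces $y \geq s_1 \geq s_2 \geq s_3$. The crucial move is to apply the Arf pattern to the triple $(y,s_2,s_3)$, which is legitimate precisely because of the ordering just verified; this gives $y+s_2-s_3 \in S$, and a direct computation yields
\[ y + s_2 - s_3 = s_1 + (n-1)(s_2-s_3) + (s_2-s_3) = s_1 + n(s_2-s_3), \]
so $s_1 + n(s_2-s_3) \in S$, closing the induction.

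I do not anticipate a real obstacle: the whole argument is the telescoping trick of reusing the Arf property with one of its previous outputs playing the role of the largest argument. The only step demanding a moment of care is checking the monotonicity condition $y \geq s_2$ before invoking the Arf pattern, which is immediate from $y \geq s_1 \geq s_2$.
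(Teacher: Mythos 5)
Your proof is correct and is essentially the paper's own argument: the paper also inducts on $n$ and writes $x_1+(n+1)(x_2-x_3) = \bigl(x_1+n(x_2-x_3)\bigr)+x_2-x_3$, feeding the previous output back into the Arf pattern. You merely make explicit the ordering check $y \geq s_1 \geq s_2 \geq s_3$ that the paper leaves implicit.
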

	\begin{proof}
		We prove this by induction on $n$. The case $n=0$ is the pattern $x_1$, the case $n=1$ is the Arf pattern itself. For the inductive step, suppose that the Arf pattern induces $x_1+n(x_2-x_3)$, then it is enough to write
		\[ x_1+(n+1)(x_2-x_3) = \Big( x_1+n(x_2-x_3) \Big) + x_2-x_3. \qedhere \]
	\end{proof}
	
	Recall that a pattern $p$ is strongly admissible if and only if it has admissibility degree at least $2$.
	
	\begin{proposition}\label{Arf induces ad geq 2}
		The Arf pattern induces every strongly admissible pattern.
	\end{proposition}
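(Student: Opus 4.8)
The plan is to work directly from the defining inequality of the Arf property, feeding everything through the single auxiliary family supplied by Lemma~\ref{Arf induces x1+n(x2-x3)}. Recall that $p$ is strongly admissible precisely when $b_i \geq 1$ for every $i \in \{1,\dots,n\}$. Fix an Arf numerical semigroup $S$ (a semigroup admitting the Arf pattern) and fix $s_1 \geq \dots \geq s_n$ in $S$; the goal is $p(s_1,\dots,s_n) \in S$. First I would rewrite $p$ through the decomposition recorded in the Preliminaries. Setting $s_{n+1} = 0 \in S$ and using $s_1 = \sum_{i=1}^n (s_i - s_{i+1})$, one obtains
\[ p(s_1,\dots,s_n) = \sum_{i=1}^n b_i (s_i - s_{i+1}) = s_1 + \sum_{i=1}^n (b_i - 1)(s_i - s_{i+1}), \]
where every coefficient $b_i - 1$ is a nonnegative integer.

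Next I would build the right-hand side up from $s_1$, adding one summand at a time while maintaining the invariant that the running total $T$ lies in $S$ and satisfies $T \geq s_1$. Initialize $T = s_1 \in S$. For each $i$, apply Lemma~\ref{Arf induces x1+n(x2-x3)} with $m = b_i - 1$ to the triple $(x_1,x_2,x_3) = (T, s_i, s_{i+1})$: since $T \geq s_1 \geq s_i \geq s_{i+1} \geq 0$ all lie in $S$, we get $T + (b_i-1)(s_i - s_{i+1}) \in S$, and the updated value is again $\geq s_1$ because only a nonnegative quantity was added. After all indices are processed the running total equals $p(s_1,\dots,s_n)$, which is therefore in $S$.

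The heart of the argument—and the step I expect to be the true obstacle—is ensuring that the dominance hypothesis $x_1 \geq x_2$ of Lemma~\ref{Arf induces x1+n(x2-x3)} is never violated. The naive attempt of telescoping the standard decomposition from the inside out, adding $b_i(s_i - s_{i+1})$ to the partial tail sum $V_{i+1} = \sum_{j \geq i+1} b_j (s_j - s_{j+1})$, fails: one only controls $V_{i+1} \geq s_{i+1}$, not $V_{i+1} \geq s_i$, so the needed inequality can break (for $p = x_1 + x_2 - x_3$ and $s = (10,5,1)$ the partial sum is $9 < 10 = s_1$). Anchoring the construction at the maximum $s_1$ and only ever adding nonnegative corrections cures this, since then the running total dominates every $s_i$ automatically and the processing order becomes irrelevant. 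I would also flag why the summands of $p$ cannot be handled in isolation: splitting off the center $C_p$ and treating it through Proposition~\ref{admissibility 1} is hopeless, because the monoid generated by the partial sums of a nonzero zero-sum pattern is \emph{not} contained in every Arf semigroup; it is exactly the positive contribution of $s_1$ (equivalently, of the head) that makes the center affordable.
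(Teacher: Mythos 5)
Your proof is correct. The identity $p(s_1,\dots,s_n)=s_1+\sum_{i=1}^{n}(b_i-1)(s_i-s_{i+1})$ (with $s_{n+1}=0$) is exactly the decomposition from the Preliminaries, every coefficient $b_i-1$ is a nonnegative integer precisely because strong admissibility is equivalent to $b_i\geq 1$ for all $i$ (stated at the start of Section~\ref{Section 2}), and anchoring the running total at $s_1$ keeps the hypothesis $x_1\geq x_2\geq x_3$ of Lemma~\ref{Arf induces x1+n(x2-x3)} valid at every step. Your route differs from the paper's in a worthwhile way: the paper first reduces to $\ad(p)=2$ via ``$p'$ induces $p$'', invokes the standard decomposition of Lemma~\ref{Head Center Tail}, runs the same anchored iteration only up to the index $t$ where $b_t=1$, and then disposes of the tail $x_1+T_p$ by an induction on the number of variables. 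You observe that none of that scaffolding is needed, since $b_i\geq 1$ holds for \emph{every} index of a strongly admissible pattern, so the single telescoping pass can be carried through all $n$ terms at once; this eliminates both the induction and the head/center/tail machinery. What the paper's structure buys in exchange is consistency with the rest of Sections~\ref{Section 2}--\ref{Section 4}, where the decomposition into $H_p$, $C_p$, $T_p$ is the organizing device (e.g.\ Propositions~\ref{admissibility 1 p iff Cp and Tp} and \ref{structure admissibility 2 monic}); your argument is the more self-contained and elementary of the two. Your closing diagnosis of why the naive inside-out telescoping fails, and why the center cannot be treated in isolation via Proposition~\ref{admissibility 1}, is also accurate.
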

	\begin{proof}
		Let $p$ be a strongly admissible pattern, so $\ad(p) \geq 2$. We proceed by induction on the number of variables $n$ of the pattern $p$. If $n=1$ then $p$ is equivalent to the zero pattern, so the Arf pattern induces $p$. Now, for the inductive step, suppose that the Arf pattern induces every pattern of admissibility degree at least $2$ with at most $n-1$ variables. Since $p'$ induces $p$, it is enough to prove that the Arf pattern induces every pattern of admissibility degree $2$ with $n$ variables. So assume $\ad(p) = 2$. Suppose that $S$ admits the Arf pattern. Let $s_1,\dots,s_n \in S$ with $s_1 \geq \dots \geq s_n$. From Lemma \ref{Head Center Tail} we have
		\[
			p(s_1,\dots,s_n) = s_1 + \sum_{i=1}^{t-1} (b_i-1)(s_i-s_{i+1})+T_p(s_{t+1},\dots,s_n),
		\]
		note that $b_t - 1 = 0$ and $t>1$. From Lemma \ref{Arf induces x1+n(x2-x3)} the Arf pattern induces the pattern $x_1+(b_1-1)(x_2-x_3)$, so $s_1' = s_1 + (b_1-1)(s_1-s_2) \in S$ with $s_1' \geq s_1$. Similarly, since the Arf pattern induces the pattern $x_1 + (b_2-1)(x_2-x_3)$, then
		\[ s_2' = s_1' + (b_2-1)(s_2-s_3) = s_1 + (b_1-1)(s_1-s_2) + (b_2-1)(s_2-s_3) \in S, \]
		with $s_2' \geq s_1$. Iterating this process we obtain that
		\[ s = s_1 + \sum_{i=1}^{t-1} (b_i-1)(s_i-s_{i+1}) \in S. \]
		Since $t > 1$, the number of variables of the pattern $x_1+T_p$ is less than $n$. By the inductive hypothesis, the Arf pattern induces $x_1+T_p(x_{t+1},\dots,x_n)$, so
		\[ p(s_1,\dots,s_n) = s + T_p(s_{t+1},\dots,s_n) \in S. \qedhere \]
	\end{proof}
	
	What we have so far is that for $k=1,2$, the subtraction pattern of degree $k$ induces all patterns of admissibility degree at least $k$. As \cite[Example 50]{bras2006patterns} shows, this cannot be extended to $k \geq 3$.
	
	\begin{theorem}\label{Arf equivalent to bi=2}
		A pattern $p = \sum_{i=1}^n a_ix_i$ is equivalent to the Arf pattern if and only if it has admissibility degree $2$ and there exists $i \in \{1,\dots,n\}$ such that $b_i = \sum_{j=1}^i a_j = 2$.
	\end{theorem}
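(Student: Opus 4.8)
The plan is to play the two inclusions $\mathscr S(x_1+x_2-x_3)\subseteq\mathscr S(p)$ and $\mathscr S(p)\subseteq\mathscr S(x_1+x_2-x_3)$ off against each other, using that the first inclusion is essentially free. Indeed, if $\ad(p)=2$ then $p$ is strongly admissible, so Proposition~\ref{Arf induces ad geq 2} already gives that the Arf pattern induces $p$; hence for the equivalence everything comes down to whether $p$ induces the Arf pattern. For the degree statement, Corollary~\ref{equivalent same degree} shows that equivalence to the Arf pattern forces $\ad(p)=\ad(x_1+x_2-x_3)=2$, so in both directions I may assume $\ad(p)=2$ from the outset.

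For the reverse implication I assume $\ad(p)=2$ and $b_{i_0}=2$ for some index, and I try to show that $S$ admits $p$ implies $S$ admits $2x_1-x_2$. Write $p$ in the telescoped form $p=\sum_{j=1}^{n-1}b_j(x_j-x_{j+1})+b_nx_n$ and use the standard decomposition of Lemma~\ref{Head Center Tail}: since $\ad(p)=2$ the head coefficients sum to $1$ and the center has coefficient sum $0$, so the partial sum at the end $t$ of the center equals $b_t=1$. Given $s_1\ge s_2$ in $S$, I would specialise
\[ x_1=\dots=x_{i_0}=s_1,\qquad x_{i_0+1}=\dots=x_t=s_2,\qquad x_{t+1}=\dots=x_n=0, \]
so that only the gap at $i_0$ (carrying coefficient $b_{i_0}=2$) and the gap at $t$ (carrying coefficient $b_t=1$) survive; then $p$ evaluates to $2(s_1-s_2)+s_2=2s_1-s_2$, which therefore lies in $S$. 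This shows $p$ induces the Arf pattern and closes the equivalence.

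For the remaining half of the forward implication I argue by contraposition: assuming $\ad(p)=2$ but no $b_i$ equal to $2$, I want a numerical semigroup admitting $p$ yet not the Arf pattern. By Proposition~\ref{admissibility min k,i} all $b_i\ge 1$, and since $\ad(p)=2$ (and not $\ge 3$) some $b_j=1$; with no value equal to $2$, every partial sum lies in $\{1\}\cup\{3,4,\dots\}$. Proposition~\ref{structure admissibility 2 monic} (and a non-monic analogue) lets me rewrite the condition ``$S$ admits $p$'' purely in terms of the patterns $x_1+(b_i-1)(x_2-x_3)$ and $x_1+T_p$, none of which is the Arf pattern because every $b_i-1\neq 1$. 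It then suffices to exhibit one semigroup satisfying all of these but violating $2x_1-x_2\in S$ for some admissible pair; the semigroup $\langle 3,4\rangle$, which admits $x_1+m(x_2-x_3)$ for all $m\ge 2$ but is not Arf, is the prototype I would build on.

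The step I expect to fight hardest with is the positional one hidden in the reverse implication: the specialisation above manifestly produces the Arf pattern only when the chosen index $i_0$ with $b_{i_0}=2$ lies in the head-center block, so that the coefficient-$1$ gap furnished by $b_t=1$ sits below it. I would therefore need to show that the existence of some $b_i=2$ can always be arranged at such an index, or else to iterate the standard decomposition on the tail $T_p$ and recover the Arf pattern from its own partial sums. Pinning down this positional claim, and dually making the separating-semigroup construction in the necessity direction uniform over all admissible $b$-sequences avoiding $2$, is where the genuine difficulty sits; the monic versus non-monic case split in the formation of $p'$ is only bookkeeping by comparison.
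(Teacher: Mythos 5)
Your architecture is the same as the paper's: the degree claim via Corollary~\ref{equivalent same degree}, the free inclusion $\mathscr{S}(x_1+x_2-x_3)\subseteq\mathscr{S}(p)$ via Proposition~\ref{Arf induces ad geq 2}, the specialisation $p(x_1,\dots,x_1,x_2,\dots,x_2,0,\dots,0)=x_1+(b_{i_0}-1)(x_1-x_2)=2x_1-x_2$ for the sufficiency of $b_{i_0}=2$, and the semigroup $\{0,q,q+1\}\cup\{q+3,\rightarrow\}$ for the necessity (your $\langle 3,4\rangle$ is the case $q=3$). Those parts are sound, though in the necessity direction you still owe the check that this semigroup admits $x_1+T_p$; that is exactly what Lemma~\ref{conductor / multiplicity} supplies, since $\ad(x_1+T_p)\geq 3$ and $\operatorname{c}(S)=\operatorname{m}(S)+3\leq 2\operatorname{m}(S)$.

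The positional issue you flag in the sufficiency direction is a genuine gap, and neither of your proposed repairs can close it. It is \emph{not} true that an index with $b_i=2$ can always be found inside the center: take $p=x_1+2x_2-2x_3+x_4$, for which $\ad(p)=2$, $(b_1,b_2,b_3,b_4)=(1,3,1,2)$, $t=3$ and $T_p=x_4$, so the only index with $b_i=2$ sits in the tail, and iterating the standard decomposition on $T_p=x_4$ produces no further $2$'s. Worse, for this $p$ the conclusion itself fails: $S=\langle 3,4\rangle=\{0,3,4\}\cup\{6,\rightarrow\}$ admits $p$ (if $s_2=s_3$ then $p(s)=s_1+s_4\in S$; if $s_3=0<s_2$ then $p(s)=s_1+2s_2\in S$; otherwise $s_1\geq s_2\geq 4$ and $p(s)=s_1+2(s_2-s_3)+s_4\geq 6=\operatorname{c}(S)$), yet $S$ is not Arf because $4+4-3=5\notin S$. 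So with $i$ ranging over all of $\{1,\dots,n\}$ the equivalence cannot be established, and the specialisation only proves the statement when some $b_i=2$ occurs with $i<t$. Be aware that the paper's own proof performs exactly the same specialisation without verifying $i\leq t$, so it carries the same unstated assumption; your write-up has the merit of making the assumption explicit, but as it stands the sufficiency half of your argument is not a proof of the statement as literally given.
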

	\begin{proof}
		From Corollary \ref{equivalent same degree}, we can assume that $\ad(p) = 2$. Now from Proposition \ref{Arf induces ad geq 2}, the Arf pattern induces $p$. If there exists $i$ such that $b_i = 2$, then
		\begin{gather*}
			p(x_1,\dots,x_n) = x_1 + \sum_{i=1}^t (b_i-1)(x_i-x_{i+1})+T_p(x_{t+1},\dots,x_n) \Rightarrow \\
			\Rightarrow p(\underbrace{x_1,\dots,x_1}_{i },\underbrace{x_2,\dots,x_2}_{t-i },0,\dots,0) = x_1 + (b_i-1)(x_1-x_2) = 2x_1-x_2.
		\end{gather*}
		Therefore $p$ induces the pattern $2x_1-x_2$ which is equivalent to the Arf pattern. On the other hand, suppose that $b_i \neq 2$ for all $i \in \{ 1,\dots,n \}$. Then, from Proposition \ref{admissibility min k,i}, either $b_i = 1$ or $b_i \geq 3$. Let $q >1$ and $S = \{ q,q+1,q+3,\rightarrow \}$. From Lemma \ref{conductor / multiplicity}, $S$ admits every pattern of admissibility degree greater or equal than $3$. In particular, $S$ admits $x_1+T_p$. Now let $s_1,\dots,s_n \in S$ with $s_1 \geq \dots \geq s_n$. If for every $i \in \{ 1,\dots,t \}$ either $b_i = 1$ or $s_i = s_{i+1}$, then
		\[ p(s_1,\dots,s_n) = s_1 + T_p(s_{t+1},\dots,s_n) \in S. \]
		Otherwise, there exists $i \in \{1,\dots t\}$ such that $s_i > s_{i+1}$ and $b_i \geq 3$, then $s_1 \geq s_i > s_{i+1} \geq q \Rightarrow s_1 \geq q+1$, and
		\[ p(s_1,\dots,s_n) \geq s_1 + (b_i-1)(s_{i+1}-s_i) \geq q+3 = \operatorname{c}(S).  \]
		Clearly, $S$ is not Arf since $2(q+1)-q = q+2 \notin S$, therefore $p$ is not equivalent to the Arf pattern.
	\end{proof}
	
	Note that Corollary \ref{structure admissibility 1} and Theorem \ref{Arf equivalent to bi=2}, generalize and provide another proof of \cite[Proposition 48]{bras2006patterns}, since if $p$ is a Boolean pattern of admissiblity degree $k$, then $b_k = k$.
	
	\section{Patterns on the numerical duplication}\label{Section 4}
	
	In this section $S$ will be a numerical semigroup, $E$ will be an ideal of $S$, $d \in S$ will be an odd integer and $p = \sum_{i=1}^n a_ix_i$ will be an admissible pattern. We say that the numerical duplication $S \Join^d E$ \emph{admits $p$ eventually with respect to $d$} if there exists $d' \in \mathbb{N}$ such that $S \Join^d E$ admits $p$ for all $d \geq d'$.
	
	\begin{proposition}
		If $S$ admits $p$ then also $\frac{S}{k}$ admits $p$ for every $k \geq 1$.
	\end{proposition}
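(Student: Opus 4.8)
The plan is to unwind the definition of the quotient $\frac{S}{k} = \{ x \in \mathbb{N} : kx \in S \}$ and to exploit the homogeneity of $p$. First I would fix elements $s_1,\dots,s_n \in \frac{S}{k}$ with $s_1 \geq \dots \geq s_n$. By the definition of the quotient this means exactly that $ks_1,\dots,ks_n \in S$, and since multiplication by the positive integer $k$ is order-preserving, $ks_1 \geq \dots \geq ks_n$ is a non-increasing chain of elements of $S$.

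Next I would use that $p$ is a linear homogeneous polynomial, so that evaluating on the scaled tuple gives
\[ p(ks_1,\dots,ks_n) = \sum_{i=1}^n a_i (ks_i) = k\sum_{i=1}^n a_i s_i = k\,p(s_1,\dots,s_n). \]
Because $S$ admits $p$ and the $ks_i$ form a non-increasing chain in $S$, we obtain $p(ks_1,\dots,ks_n) \in S$, that is, $k\,p(s_1,\dots,s_n) \in S$. Again by the definition of the quotient this says precisely $p(s_1,\dots,s_n) \in \frac{S}{k}$, which is exactly what is needed for $\frac{S}{k}$ to admit $p$.

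The only auxiliary point worth recording is that $\frac{S}{k}$ is genuinely a numerical semigroup, i.e. a submonoid of $\mathbb{N}$ with finite complement, so that the statement ``$\frac{S}{k}$ admits $p$'' is meaningful; this is standard and already implicit in the introduction, where the quotient is introduced. I do not expect any real obstacle here: the argument is driven entirely by the homogeneity of $p$ together with the fact that scaling by $k$ both preserves membership in the chain and respects the ordering, and it uses admissibility of $p$ only through the hypothesis $S \in \mathscr{S}(p)$.
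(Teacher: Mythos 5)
Your proposal is correct and follows exactly the paper's own argument: scale a non-increasing tuple in $\frac{S}{k}$ by $k$ to land in $S$, use homogeneity to get $p(ks_1,\dots,ks_n) = k\,p(s_1,\dots,s_n) \in S$, and conclude $p(s_1,\dots,s_n) \in \frac{S}{k}$ by the definition of the quotient. No gaps.
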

	\begin{proof}
		If $\lambda_1 \geq \dots \geq \lambda_n$ are elements of $\frac{S}{k}$, then $k\lambda_1 \geq \dots \geq k\lambda_n$ are in $S$. Therefore
		\[ p(k\lambda_1,\dots,k\lambda_n) = k p(\lambda_1,\dots,\lambda_n) \in S \Rightarrow p(\lambda_1,\dots,\lambda_n) \in \frac{S}{k}. \qedhere \]
	\end{proof}
	
	For the next result, recall that $\frac{S \Join^d E}{2} = S$.
	
	\begin{corollary}
		If $S \Join^d E$ admits p then $S$ admits $p$.
	\end{corollary}
	
	Throughout we will assume that $S$ admits the pattern $p$.
	Note that if $p$ has admissibility degree $2$, then by applying Corollary \ref{admissibility 1 exist bi = 0} to the center of $p$, we obtain that the set $B = \{ i: b_i-1=0 \}$ is nonempty.
	
	
	\begin{proposition}\label{num dup ad 2 1}
		Suppose that $p$ has admissibility degree $2$ and set
		\[ B = \{ i : b_i-1 = 0 \}, \quad r = \min B, \quad t = \max B. \]
		If $S \Join^d E$ admits $p$ eventually with respect to $d$, then
		\begin{enumerate}
			\item for every $1 \leq i \leq t$, $\lfloor \frac{b_i}{2} \rfloor \in E-E$;
			\item for every $r \leq i \leq t$, if $b_i$ is even then $b_i/2 \geq \operatorname{c}(E)-\min(E)$.
		\end{enumerate}
	\end{proposition}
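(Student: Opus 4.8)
The plan is to reduce everything to the even/odd description of the duplication and, for each index in question, to exhibit one explicit descending input tuple whose image under $p$ is an \emph{odd} element of $S \Join^d E$, so that the membership condition translates into a condition on $E$.

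First I would fix the framework. Every element of $S \Join^d E$ is either of \emph{even type} $2\sigma$ with $\sigma \in S$, or of \emph{odd type} $2e+d$ with $e \in E$ (odd, since $d$ is odd). Writing a descending tuple as $y_j = 2w_j + \epsilon_j d$ with $\epsilon_j \in \{0,1\}$ and $w_j \in S$ or $w_j \in E$ accordingly, one gets $p(y_1,\dots,y_n) = 2\sum_j a_j w_j + d\,\delta$ where $\delta := \sum_{j:\epsilon_j=1} a_j$, so the parity of $p(\vec y)$ is that of $\delta$. Hence $p(\vec y)\in S\Join^d E$ amounts to its half lying in $S$ when $\delta$ is even, and to $(p(\vec y)-d)/2 \in E$ when $\delta$ is odd. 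Since $\operatorname{c}(S\Join^d E) = 2\operatorname{c}(E)+d-1$ grows linearly in $d$, configurations with $\delta$ of absolute value $\geq 2$ and bounded $w_j$ eventually exceed the conductor and impose nothing; the arithmetic content of admissibility is therefore carried by configurations with $\delta$ odd. I will force $\delta = \pm 1$ by choosing which positions are odd type, using that even-type entries $2\sigma$ with $\sigma \approx d/2$ may be placed above odd-type entries and still respect the descending order for $d \gg 0$.

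For part (1) fix $i$ with $1\leq i\leq t$; the case $i=t$ is trivial as $b_t=1$ gives $\lfloor b_t/2\rfloor=0$. For $i<t$ I would use the partial evaluation $p(y_1,\dots,y_t,0,\dots,0)=\sum_{j\leq t} b_j(y_j-y_{j+1})$, set $y_1=\cdots=y_i=A$ and $y_{i+1}=\cdots=y_t=B=A-1$, and invoke $b_t=1$ to obtain $p(\vec y)=b_i+B$. The parity of $b_i$ then dictates the assignment of types. If $b_i$ is odd, take the top block odd and the middle block even, so $A=2f+d$, $B=2f+d-1$ with $f\in E$ and $\delta=b_i$; this yields $p(\vec y)=2\bigl(f+\lfloor b_i/2\rfloor\bigr)+d$. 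If $b_i$ is even, take the top block even and the middle block odd, so $A=2\sigma$, $B=2e+d$ with $e\in E$ and $\sigma=e+(d+1)/2\in S$, giving $\delta=b_t-b_i=1-b_i$ and $p(\vec y)=2\bigl(e+b_i/2\bigr)+d$. In either case the output is an odd element and the free parameter ranges over all of $E$, so admissibility forces $\lfloor b_i/2\rfloor + E \subseteq E$, that is $\lfloor b_i/2\rfloor \in E-E$.

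The main obstacle is part (2), whose conclusion is a size estimate rather than a membership: here I would argue by contradiction. Assuming $b_i/2 < \operatorname{c}(E)-\min(E)$ for some even $b_i$ with $r\leq i\leq t$, I want to build a descending tuple in $S\Join^d E$ whose image is an odd number $2v+d$ with $v$ a \emph{gap} of $E$, contradicting admissibility for $d\gg0$. The difficulty is exactly that the even-$b_i$ template of part (1) only forces the minimal value $\min(E)+b_i/2\in E$, and neither the tail $T_p$ (which, being strongly admissible, is nonnegative on descending nonnegative inputs and hence cannot subtract) nor the choice of the odd-type component $e\in E$ can by themselves push the forced $E$-value below $\min(E)+b_i/2$. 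The plan is to exploit the hypothesis $r\leq i\leq t$, so that both $b_r=b_t=1$ are available and the head collapses to a single $x_1$ while the tail is killed by zeros, and then to use the remaining freedom together with entries scaled linearly in $d$ and the negative coefficients of the center to drive $v$ down onto the largest gap $\operatorname{c}(E)-1$; the core of the argument, and where I expect the real work, is verifying that such a descending tuple exists precisely when the claimed inequality fails.
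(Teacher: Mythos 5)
Your part (1) is correct and is essentially the paper's own argument: two adjacent blocks differing by $1$, with the parity of $b_i$ deciding which block is of odd type, so that $p(\vec y)=2(e+\lfloor b_i/2\rfloor)+d$ is odd and forces $e+\lfloor b_i/2\rfloor\in E$ for every $e\in E$. (The paper takes $\lambda=2e+d-1$ and uses $\lambda+1,\lambda$ in the odd case and $\lambda+2,\lambda+1$ in the even case; this is the same computation.)

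Part (2), however, is left as a plan rather than a proof, and the mechanism you gesture at points in the wrong direction. You propose to reach a single gap of $E$ by using ``the negative coefficients of the center to drive $v$ down onto the largest gap $\operatorname{c}(E)-1$''; but the base value your even-$b_i$ template forces is $\min(E)+b_i/2$, which (when the inequality fails) lies \emph{below} $\operatorname{c}(E)-1$, so you need to push the forced value \emph{up} by an arbitrary amount, and negative coefficients cannot do that. The construction that works --- and is the paper's --- uses three nested blocks with boundaries at $r$, $i$ and $t$: take $\lambda=2\min(E)+d+1$ (even type for $d\gg0$), $\mu=\lambda+2x$ with $x\in\mathbb{N}$ arbitrary, and evaluate
\[
p(\underbrace{\mu,\dots,\mu}_{r},\underbrace{\lambda,\dots,\lambda}_{i-r},\underbrace{\lambda-1,\dots,\lambda-1}_{t-i},0,\dots,0)=b_r(\mu-\lambda)+b_i\cdot 1+b_t(\lambda-1)=\mu+b_i-1,
\]
where $b_r=b_t=1$ (this is exactly why the hypothesis $r\le i\le t$ matters) and $\lambda-1=2\min(E)+d$ is the unique odd-type entry. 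Since $\mu+b_i-1=2(\min(E)+b_i/2+x)+d$ is odd, admissibility forces $\min(E)+b_i/2+x\in E$ for \emph{every} $x\in\mathbb{N}$, which is equivalent to $\min(E)+b_i/2\ge\operatorname{c}(E)$; no contradiction argument or gap-hitting is needed. The coefficient $b_r=1$ on the difference $\mu-\lambda=2x$ is the free parameter that sweeps out all of $\min(E)+b_i/2+\mathbb{N}$, and this is the step your sketch is missing.
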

	\begin{proof}
		From Lemma \ref{Head Center Tail}, we can write $p$ in the following manner
		\[ p = x_1 + \sum_{i=1}^{t-1}(b_i-1)(x_i-x_{i+1}) + T_p(x_{t+1},\dots,x_n). \]
		Now, assume $d \geq 2\operatorname{c}(S)-2\min(E)+1$, then we have that
		\[ (2\min(E)+d-1) +2\cdot\mathbb{N} \subseteq 2\operatorname{c}(S) + 2 \cdot \mathbb{N} \subseteq 2 \cdot S \subseteq S \Join^d E. \]
		Let $i \in \{1,\dots,t-1 \}$, $e \in E$ and fix $\lambda = 2e + d-1$. By the assumption on $d$ we have that $\lambda \in S \Join^d E$. If $b_i$ is odd, it follows that
		\[ p(\underbrace{\underbrace{\lambda+1,\dots,\lambda+1}_{i },\lambda,\dots,\lambda}_{t },0,\dots,0) = \lambda+1 + (b_i-1) = \]
		\[ = 2e+d+b_i-1 = 2(e+(b_i-1)/2) + d \in S \Join^d E, \]
		hence $e+(b_i-1)/2 \in E$, so by the arbitrary choice of $e \in E$ we have $(b_i-1)/2 \in E-E$. On the other hand, if $b_i$ is even, then
		\[ p(\underbrace{\underbrace{\lambda+2,\dots,\lambda+2}_{i },\lambda+1,\dots,\lambda+1}_{r },0,\dots,0) = \lambda+2 + (b_i-1) = \]
		\[ = 2e+d+b_i = 2(e+b_i/2) + d \in S \Join^d E \]
		hence $e+b_i/2 \in E$, so as before $b_i/2 \in E-E$. This proves that $\lfloor \frac{b_i}{2} \rfloor \in E-E$. Now let $i \in \{r,\dots,t\}$ such that $b_i$ is even and set $\lambda = 2\min(E) + d + 1$. Let $x \in \mathbb{N}$ and set $\mu = \lambda+2x$. Again by the assumption on $d$ we have that $\mu,\lambda \in S \Join^d E$. Thus
		\[ p(
		\underbrace{
			\underbrace{
				\underbrace{
					\mu,\dots,\mu}_{r }
				,\lambda\dots,\lambda}_{i }
			,\lambda-1,\dots,\lambda-1}_{t }
		,0,\dots,0) = \mu + (b_i-1) = \]
		\[ = \lambda+2x+b_i-1 = 2(\min(E)+b_i/2+x) + d \in S \Join^d E, \]
		hence $\min(E)+b_i/2+x \in E$. By the arbitrary choice of $x$ we have that $\min(E) + b_i/2 \geq \operatorname{c}(E) \Rightarrow b_i/2 \geq \operatorname{c}(E)-\min(E)$.
	\end{proof}
	
	\begin{proposition}\label{admissibility 2}
		If $p$ has admissibility degree $2$ and is monic, then $S \Join^d E$ admits $p$ eventually with respect to $d$ if and only if
		\begin{enumerate}
			\item for every $i \in \{1,\dots,t\}$
			\begin{itemize}
				\item if $b_i$ is odd then $(b_i-1)/2 \in E-E$;
				\item if $b_i$ is even then $b_i/2 \geq \operatorname{c}(E)-\min(E)$.
			\end{itemize}
			\item $S \Join^d E$ admits $x_1+T_p$.
		\end{enumerate}
	\end{proposition}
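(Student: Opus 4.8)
The plan is to prove both implications by combining the two preceding results with the reduction furnished by Proposition \ref{structure admissibility 2 monic}. For the necessity, I would first note that since $p$ is monic we have $b_1=a_1=1$, so $1\in B$ and hence $r=\min B=1$. With this, Proposition \ref{num dup ad 2 1} applies over the full range $1\le i\le t$: its first conclusion gives $\lfloor b_i/2\rfloor\in E-E$, which for odd $b_i$ is exactly $(b_i-1)/2\in E-E$, while its second conclusion, now valid for all $1\le i\le t$ precisely because $r=1$, gives $b_i/2\ge\operatorname{c}(E)-\min(E)$ for even $b_i$. This is condition (1). Condition (2) is then immediate: if $S\Join^d E$ admits $p$, then applying the necessity part of Proposition \ref{structure admissibility 2 monic} to the numerical semigroup $S\Join^d E$ shows that it admits $x_1+T_p$.

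For the sufficiency I would use Proposition \ref{structure admissibility 2 monic} in the other direction: it suffices to prove that, for $d\gg0$, the duplication $S\Join^d E$ admits every $p_i(x_1,x_2,x_3)=x_1+(b_i-1)(x_2-x_3)$ with $i\in\{2,\dots,n\}$ together with $x_1+T_p$. The pattern $x_1+T_p$ is condition (2). The patterns $p_i$ with $t<i\le n$ come for free: $x_1+T_p$ is monic of admissibility degree $2$ with partial sums $1=b_t,b_{t+1},\dots,b_n$, so applying the necessity part of Proposition \ref{structure admissibility 2 monic} to $x_1+T_p$ shows that its admissibility already forces the admissibility of $p_i$ for every $i>t$. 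Thus the whole problem reduces to the patterns $p_i$ with $2\le i\le t$, which are exactly the indices controlled by condition (1).

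Fix such an $i$ and set $c=b_i-1$, discarding the trivial case $c=0$. I would show $S\Join^d E$ admits $x_1+c(x_2-x_3)$ for $d\gg0$ by direct verification: for $u\ge v\ge w$ in $S\Join^d E$ one must check $R=u+c(v-w)\in S\Join^d E$. Three inputs enter. First, $S$ itself admits $x_1+c(x_2-x_3)$, by the standing hypothesis that $S$ admits $p$ together with Proposition \ref{structure admissibility 2 monic}; this handles the configurations where $R$ is even and its half must lie in $S$. Second, the conductor formula $\operatorname{c}(S\Join^d E)=2\operatorname{c}(E)+d-1$ lets one discard automatically any value of $R$ reaching the conductor, which disposes of every configuration in which $R$ carries a term of order $d$. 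Third, condition (1): the argument splits according to the parities of $u,v,w$, that is, according to whether each lies in $2\cdot S$ or in $2\cdot E+d$, and computes in each case whether $R$ is even, in which case one invokes the first or second input, or odd, in which case one needs $(R-d)/2\in E$. When $b_i$ is odd (so $c$ is even) the relevant quantity is a nonnegative multiple of $c/2$, and $c/2\in E-E$ propagates membership in $E$; when $b_i$ is even (so $c$ is odd) the bound $b_i/2\ge\operatorname{c}(E)-\min(E)$ pushes the relevant half to or beyond $\operatorname{c}(E)$, hence into $E$.

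The main obstacle is the bookkeeping in this last step. There are several parity configurations, and in each one must check that the correct half of $R$ falls into the correct component of $S\Join^d E$, matching the even case of condition (1) to the situations that produce odd values with bounded $c(v-w)$, and the largeness argument to the situations in which a factor of $d$ appears. One also has to fix the threshold $d'$ large enough to work simultaneously for the finitely many indices $i\in\{2,\dots,t\}$ and for the finitely many boundary configurations in which $R$ remains below the conductor; once this uniform choice of $d'$ is made, each remaining verification is elementary.
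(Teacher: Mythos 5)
Your proof follows essentially the same route as the paper's: necessity via Proposition \ref{num dup ad 2 1} (using that $p$ monic forces $r=1$) together with a substitution giving $x_1+T_p$, and sufficiency by reducing to the three-variable patterns $p_i=x_1+(b_i-1)(x_2-x_3)$ through Proposition \ref{structure admissibility 2 monic} and then splitting on the parity of $b_i$, on whether $\lambda_1$ lies in $2\cdot S$ or in $2\cdot E+d$, and on the conductor bound $2\operatorname{c}(E)+d-1$ --- exactly the paper's case analysis. One inaccuracy worth fixing: $x_1+T_p$ has admissibility degree $\ad(T_p)+1\geq 3$, not $2$, so Proposition \ref{structure admissibility 2 monic} cannot be applied to $x_1+T_p$ to dispose of the patterns $p_i$ with $i>t$; the correct (and simpler) observation, which the paper relies on implicitly, is that the sufficiency direction of Proposition \ref{structure admissibility 2 monic} only ever uses $p_i$ for $i\in\{2,\dots,t\}$ together with $x_1+T_p$, so the reduction to $2\leq i\leq t$ that you arrive at is valid in any case.
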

	\begin{proof}
		\emph{Necessity}. The first condition follows from Proposition \ref{num dup ad 2 1} since we have $b_1 - 1 = 0$. Further, if we take $x_2=x_3=\dots=x_t$, then
		\[ p(x_1,x_2\dots,x_2,x_{t+1},\dots,x_n) = x_1 + T_p(x_{t+1},\dots,x_n). \]
		\emph{Sufficiency}. From Proposition \ref{structure admissibility 2 monic} it is enough to show that $S \Join^d E$ admits $p_i(x_1,x_2,x_3) = x_1 + (b_i-1)(x_2-x_3)$ for all $i \in \{ 2,\dots,n \}$. Let $i \in \{2,\dots,n\}$ and $\lambda_1,\lambda_2,\lambda_3 \in S \Join^d E$ with $\lambda_1 \geq \lambda_2 \geq \lambda_3$. If $\lambda_2 = \lambda_3$, then $p_i(\lambda_1,\lambda_2,\lambda_3) = \lambda_1 \in S\Join^d E$, so we can assume $\lambda_2 > \lambda_3$. Since $S$ admits $p$, it admits also $p_i$, so if $\lambda_1 < 2\min(E)+d$ then $\lambda_1,\lambda_2,\lambda_3 \in 2\cdot S$ and $p_i(\lambda_1,\lambda_2,\lambda_3) \in 2 \cdot S \subseteq S \Join^d E$. Now assume that $\lambda_1 \geq 2\min(E)+d$. If $b_i$ is even then, $b_i \geq 2(\operatorname{c}(E)-\min(E))$ and we have
		\[
		\begin{split}
		p_i(\lambda_1,\lambda_2,\lambda_3) & = \lambda_1 + (b_i-1)(\lambda_2-\lambda_3) \geq \lambda_1+b_i-1 \geq \\
		& \geq 2 \min(E) + d + 2(\operatorname{c}(E)-\min(E))-1 = \\
		& = 2\operatorname{c}(E)+d-1 = c(S\Join^d E),
		\end{split}
		\]
		therefore $p_i(\lambda_1,\lambda_2,\lambda_3) \in S \Join^d E$. On the other hand, if $b_i$ is odd, then $\mu = (b_i-1)(\lambda_2-\lambda_3) \in 2(E-E)$ since $E-E$ is a semigroup. Now if $\lambda_1$ is even, then $\lambda_1+\mu$ is also even, so for $d \gg 0$ we have $\lambda_1+\mu \in 2 \cdot S \subseteq S \Join^d E$. If $\lambda_1 = 2e+d \in 2 \cdot E+d$, then $\lambda_1+\mu = 2(e+\mu/2)+d \in 2 \cdot E+d \subseteq S \Join^d E$ since $\mu \in 2(E-E)$.
	\end{proof}
	
	\begin{proposition}\label{admissibility 3 not monic}
		If $p$ has admissibility degree at least $3$ and it is not monic (i.e. $a_1 \geq 2$), then $S \Join^d E$ admits $p$ eventually with respect to $d$.
	\end{proposition}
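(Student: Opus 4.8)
The plan is to bound $p$ from below so crudely that, as soon as any argument is odd, the value already exceeds the conductor of $S \Join^d E$, while the all-even case reduces directly to the fact that $S$ admits $p$. First I would record that the two hypotheses together force every partial sum to be at least $2$: since $\ad(p) \geq 3$, Proposition \ref{admissibility min k,i} applied with $k=2$ gives $b_i \geq \min\{i,2\} = 2$ for all $i \geq 2$, while non-monicity gives $b_1 = a_1 \geq 2$. Hence $b_i \geq 2$ for every $i \in \{1,\dots,n\}$. Feeding this into the telescoping identity $p(x_1,\dots,x_n) = \sum_{i=1}^{n-1} b_i(x_i - x_{i+1}) + b_n x_n$, all of whose summands are nonnegative on a weakly decreasing tuple, I would obtain the uniform estimate $p(\lambda_1,\dots,\lambda_n) \geq 2(\lambda_1 - \lambda_n) + 2\lambda_n = 2\lambda_1$ for any $\lambda_1 \geq \dots \geq \lambda_n$ in $S \Join^d E$ (the case $n=1$ being $p(\lambda_1) = a_1\lambda_1 \geq 2\lambda_1$ directly).

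Next I would split on the parity of the arguments. Because $d$ is odd, the even elements of $S \Join^d E$ are exactly those of $2 \cdot S$ and the odd ones exactly those of $2 \cdot E + d$. If all $\lambda_i$ are even, write $\lambda_i = 2\mu_i$ with $\mu_1 \geq \dots \geq \mu_n$ in $S$; then $p(\lambda_1,\dots,\lambda_n) = 2\,p(\mu_1,\dots,\mu_n) \in 2 \cdot S \subseteq S \Join^d E$, using that $S$ admits $p$. If instead some $\lambda_i$ is odd, then $\lambda_i \in 2 \cdot E + d$ forces $\lambda_i \geq 2\min(E) + d$, and since $\lambda_1$ is the largest argument we get $\lambda_1 \geq 2\min(E) + d$. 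Combined with the estimate $p \geq 2\lambda_1$ from the first step, this yields $p(\lambda_1,\dots,\lambda_n) \geq 4\min(E) + 2d$.

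Finally I would compare this with $\operatorname{c}(S \Join^d E) = 2\operatorname{c}(E) + d - 1$: the inequality $4\min(E) + 2d \geq 2\operatorname{c}(E) + d - 1$ holds precisely when $d \geq 2\operatorname{c}(E) - 4\min(E) - 1$, so for every such $d$ any value $p(\lambda_1,\dots,\lambda_n)$ arising from a tuple with an odd entry lies beyond the conductor and hence in $S \Join^d E$; taking $d' = 2\operatorname{c}(E) - 4\min(E) - 1$ then settles the claim. I expect the only genuine subtlety to be conceptual rather than computational: the ambient semigroup $S \Join^d E$ itself varies with $d$, so one cannot simply fix the $\lambda_i$ and let $d$ grow. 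The estimate $p \geq 2\lambda_1$ is exactly what neutralizes this, since it ties the size of the output to the size of the inputs, and an odd input is itself forced to grow with $d$. This is also where the non-monic hypothesis does its work — it is what upgrades $b_1$ to at least $2$ — so that the delicate $E-E$ and $\operatorname{c}(E)-\min(E)$ bookkeeping that governs the monic case of Proposition \ref{admissibility 2} never appears: here the conductor swallows the odd case outright.
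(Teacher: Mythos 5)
Your proof is correct and follows essentially the same strategy as the paper: the paper obtains the key bound $p(\lambda_1,\dots,\lambda_n)\geq 2\lambda_1$ by writing $p=2x_1+p''$ with $p''$ admissible, while you get it from $b_i\geq 2$ and the telescoping decomposition, which amounts to the same thing. The case analysis (all arguments in $2\cdot S$ versus an argument forcing $\lambda_1\geq 2\min(E)+d$, then comparing $4\min(E)+2d$ with the conductor $2\operatorname{c}(E)+d-1$) matches the paper's proof.
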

	\begin{proof}
		Let $\lambda_1,\dots,\lambda_n \in S \Join^d E$ with $\lambda_1 \geq \dots \geq \lambda_n$. Since $S$ admits $p$, if $\lambda_1 < 2\min(E)+d$ then $\lambda_i \in 2\cdot S$ for all $i \in \{1,\dots,n\}$ and we have $p(\lambda_1,\dots,\lambda_n) \in 2 \cdot S \subseteq S \Join^d E$. Now assume that $\lambda_1 \geq 2\min(E)+d$. Note that, since $p$ has admissibility degree at least $3$, $p''$ is admissible, so $p''(\lambda_1,\dots,\lambda_n) \geq 0$. Now if we take $d \geq 2\operatorname{c}(E) - 4\min(E)$, then
		\[ 
		\begin{split}
			p(\lambda_1,\dots,\lambda_n) = 2\lambda_1 + p''(\lambda_1,\dots,\lambda_n) & \geq 4\min(E)+2d \geq \\
			& \geq 2\operatorname{c}(E)+d \geq c(S \Join^d E),
		\end{split}
		\]
		hence $p(\lambda_1,\dots,\lambda_n) \in S \Join^d E$.
	\end{proof}
	
	
	
	\begin{proposition}\label{admissibility 3}
		If $p$ is monic with admissibility degree at least 3, then $p'(S) \subseteq E-E$ if and only if $S \Join^d E$ admits $p$ eventually with respect to $d$.
	\end{proposition}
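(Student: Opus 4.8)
The plan is to prove the two implications separately, using throughout the identity $p(x_1,\dots,x_n) = x_1 + p'(x_2,\dots,x_n)$, which holds because $p$ is monic (so $a_1=1$ and $p'(y_1,\dots,y_{n-1}) = a_2y_1+\dots+a_ny_{n-1}$), together with the homogeneity $p'(2s_1,\dots,2s_{n-1}) = 2\,p'(s_1,\dots,s_{n-1})$ and the structural fact that the even elements of $S\Join^d E$ are exactly $2\cdot S$ while the odd ones are exactly $2\cdot E+d$, every odd element being $\geq 2\min(E)+d$. I will also use $E-E$ in the sense $\{z\in\mathbb{Z}:z+E\subseteq E\}$, so that $e\in E$ and $m\in E-E$ give $e+m\in E$.

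For necessity I would exploit a single well-chosen evaluation. Fix a weakly decreasing tuple $s_1\geq\dots\geq s_{n-1}$ in $S$ and an arbitrary $e\in E$. Taking $d$ in the eventual range and large enough that $2e+d\geq 2s_1$, set $\lambda_1=2e+d$ and $\lambda_{i+1}=2s_i$ for $i=1,\dots,n-1$; this is a weakly decreasing tuple in $S\Join^d E$. Then
\[ p(\lambda_1,\dots,\lambda_n) = (2e+d) + p'(2s_1,\dots,2s_{n-1}) = 2\big(e + p'(s_1,\dots,s_{n-1})\big) + d \]
is odd, hence lies in $2\cdot E+d$, forcing $e+p'(s_1,\dots,s_{n-1})\in E$. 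Since this holds for every $e\in E$ and the conclusion no longer mentions $d$, we obtain $p'(s_1,\dots,s_{n-1})+E\subseteq E$, i.e. $p'(s_1,\dots,s_{n-1})\in E-E$; ranging over all tuples gives $p'(S)\subseteq E-E$.

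For sufficiency I would argue by cases on the parities and sizes of the $\lambda_i$. If all $\lambda_i$ are even, $\lambda_i=2s_i$, then $p(\lambda)=2\,p(s_1,\dots,s_n)\in 2\cdot S$ because $S$ admits $p$. Otherwise some $\lambda_j$ is odd, so $\lambda_1\geq\lambda_j\geq 2\min(E)+d$. The crucial quantitative input is that $\ad(p)\geq 3$ gives, via Proposition \ref{admissibility min k,i}, $b_1=1$ and $b_i\geq 2$ for $i\geq 2$; substituting these lower bounds into the telescoping
\[ p(\lambda) = \sum_{i=1}^{n-1} b_i(\lambda_i-\lambda_{i+1}) + b_n\lambda_n \]
yields $p(\lambda)\geq \lambda_1+\lambda_2$. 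Hence if $\lambda_2$ is also large ($\geq 2\min(E)+d$) then $p(\lambda)\geq 4\min(E)+2d\geq 2\operatorname{c}(E)+d-1=\operatorname{c}(S\Join^d E)$ for $d\gg0$, so $p(\lambda)\in S\Join^d E$. The only remaining configuration is $\lambda_2<2\min(E)+d$: then $\lambda_2,\dots,\lambda_n$ are even and small while $\lambda_1=2e+d$ is the unique odd entry, and exactly as in the necessity computation $p(\lambda)=2\big(e+p'(s_2,\dots,s_n)\big)+d\in 2\cdot E+d$ because $p'(s_2,\dots,s_n)\in E-E$ by hypothesis.

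I expect the main obstacle to be organizing sufficiency into these three mutually exclusive cases, and in particular discovering the bound $p(\lambda)\geq\lambda_1+\lambda_2$: it is precisely what collapses the genuinely hard situation down to the single ``only $\lambda_1$ odd'' configuration, the one the hypothesis $p'(S)\subseteq E-E$ is tailored to resolve, while every other case is absorbed either into $2\cdot S$ or past the conductor. I would also record the explicit uniform threshold on $d$ (of order $2\operatorname{c}(E)-4\min(E)$) needed to push the large case beyond $\operatorname{c}(S\Join^d E)$, noting that the small trivial case $n=1$ (where $p=x_1$ and both sides hold) needs no argument.
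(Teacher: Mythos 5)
Your proof is correct and follows essentially the same route as the paper: the same odd-element evaluation $p(2e+d,2s_1,\dots,2s_{n-1})=2(e+p'(s_1,\dots,s_{n-1}))+d$ for one direction, and for the other the same three-way case split with the bound $p(\lambda)\geq\lambda_1+\lambda_2$ (which you get from the $b_i\geq\min\{i,2\}$ estimates, while the paper equivalently uses admissibility of $p''$). The only difference is a small streamlining: by choosing $d$ large relative to $s_1$ you avoid the paper's extra subcase $2s_1\geq 2\min(E)+d$, which it handles by showing $g\geq\operatorname{c}(E)$ directly.
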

	\begin{proof}
		\emph{Necessity}. Let $\lambda_1,\dots,\lambda_n \in S \Join^d E$ with $\lambda_1 \geq \dots \geq \lambda_n $. First assume that $\lambda_2 < 2\min(E)+d$, so $\lambda_i = 2 s_i$ with $s_i \in S$ for all $i \geq 2$. Now if $\lambda_1 \in 2\cdot S$, then $p(\lambda_1,\dots,\lambda_n) \in 2 \cdot S \subseteq S \Join^d E$. Otherwise, if $\lambda_1 = 2e+d \in 2\cdot E + d$, then fix $g = p'(s_2,\dots,s_n) \in p'(S) \subseteq E-E$, we have $g+e \in E$, hence
		\[
		\begin{split}
			p(\lambda_1,\dots,\lambda_n) & = 2e+d + p'(2s_2,\dots,2s_n) =\\
			& =  2e+d + 2 p'(s_2,\dots,s_n) = \\
			& = 2(e+g)+d \in 2\cdot E + d \subseteq S \Join^d E.
		\end{split}
		\]
		On the other hand, if $\lambda_2 \geq 2\min(E)+d$, take $d \geq 2\operatorname{c}(E)-4\min(E)$. Since $p$ has admissibility degree at least 3, $p''$ is admissible, so $p''(\lambda_2,\dots,\lambda_n) \geq 0$, then
		\[
		\begin{split}
		p(\lambda_1,\dots,\lambda_n) = \lambda_1 + \lambda_2 + p''(\lambda_2,\dots,\lambda_n) & \geq 4\min(E)+2d \geq \\
		& \geq 2\operatorname{c}(E)+d \geq c(S \Join^d E),
		\end{split}
		\]
		hence $p(\lambda_1,\dots,\lambda_n) \in S \Join^d E$. \\
		\emph{Sufficiency}. Let $g = p'(s_1,\dots,s_{n-1}) \in p'(S)$, with $s_1, \dots, s_{n-1} \in S$ and $s_1 \geq \dots \geq s_{n-1}$. Let $e \in E$, it is enough to prove that $g+e \in E$. If $2s_1 < 2\min(E)+d \leq 2e+d$, it follows that
		\[
		\begin{split}
			p(2e+d,2s_1,\dots,2s_{n-1}) & = 2e+d+p'(2s_1,\dots,2s_{n-1}) = \\
			& = 2e+d+2p'(s_1,\dots,s_{n-1}) = \\
			& = 2(e+g)+d \in S \Join^d E \Rightarrow e+g \in E.
		\end{split}
		\]
		On the other hand, if $2s_1 \geq 2\min(E)+d$, take $d \geq 2\operatorname{c}(E) -2\min(E)$, then
		\[
		\begin{split}
			2g & = p'(2s_1,\dots,2s_{n-1}) = \\
			& = 2s_1 + p''(2s_1,\dots,2s_{n-1}) \geq 2s_1 \geq 2 \min(E)+d \geq 2\operatorname{c}(E),
		\end{split}
		\]
		hence $g \geq \operatorname{c}(E) \Rightarrow g \in E-E$.
	\end{proof}

	Assembling Corollary \ref{structure admissibility 1}, Proposition \ref{admissibility 2} and Proposition \ref{admissibility 3} and iterating these results on $H_p+T_p$, we are able to characterize when the numerical duplication $S \Join^d E$ admits a monic pattern $p$ for $d \gg 0$.
	
	\begin{theorem}
		Let $p$ be a monic pattern, written as
		\[ p(x_1,\dots,x_n) = H_p(x_1,\dots,x_h)+C_p(x_{h+1},\dots,x_t) + T_p(x_{t+1},\dots,x_n). \]
		Then $S \Join^d E$ admits $p$ eventually with respect to $d$ if and only if one of the following cases occurs:
		\begin{enumerate}
			\item $\ad(p) = 1$, $S \Join^d E = \mathbb{N}$.
			
			\item $\ad(p) = 2$, for every $i \in \{1,\dots,t\}$
				\begin{itemize}
					\item if $b_i$ is odd then $(b_i-1)/2 \in E-E$;
					\item if $b_i$ is even then $b_i/2 \geq \operatorname{c}(E)-\min(E)$;
				\end{itemize}
			and $S \Join^d E$ admits $x_1+T_p$.
			
			\item $\ad(p) \geq 3$ and $p'(S) \subseteq E-E$.
		\end{enumerate}
	\end{theorem}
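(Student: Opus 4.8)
The plan is to prove the theorem as a case analysis on the admissibility degree of the monic pattern $p$, since the trichotomy $\ad(p)=1$, $\ad(p)=2$, $\ad(p)\geq 3$ is exhaustive and matches the three displayed cases exactly. The unifying observation is that, in the standard decomposition $p=H_p+C_p+T_p$, the coefficients of the head are positive and sum to $\ad(p)-1$ (Lemma \ref{Head Center Tail}); combined with monicity this pins down the head in each branch, and each branch has already been settled by a dedicated result, so the remaining work is mainly to assemble them and verify the bookkeeping.

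First I would dispose of the case $\ad(p)\geq 3$. Here $p$ is monic with admissibility degree at least $3$, which is precisely the hypothesis of Proposition \ref{admissibility 3}; that proposition states verbatim that $S\Join^d E$ admits $p$ eventually with respect to $d$ if and only if $p'(S)\subseteq E-E$, which is case (3). No decomposition is needed here, since the condition is phrased directly in terms of $p'$.

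Next, for $\ad(p)=2$ the head has positive coefficients summing to $\ad(p)-1=1$, so $H_p=x_1$ and $p=x_1+C_p+T_p$. This is exactly the setting of Proposition \ref{admissibility 2}, whose two conditions are, term for term, the conditions in case (2). The residual requirement that $S\Join^d E$ admit $x_1+T_p$ is left implicit in the statement; to turn it into a terminating recipe one reapplies the whole theorem to the shorter pattern $x_1+T_p=H_p+T_p$, which genuinely has fewer variables than $p$ because $t>1$, so the recursion is well founded.

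Finally, for $\ad(p)=1$ I would invoke Corollary \ref{structure admissibility 1}: a numerical semigroup admits $p$ if and only if it admits $T_p$ and contains the monoid generated by $b_1,\dots,b_t$. Since $p$ is monic we have $b_1=a_1=1$, so this monoid is all of $\mathbb{N}$; hence the only numerical semigroup admitting $p$ is $\mathbb{N}$ itself, and applying this to $S\Join^d E$ yields case (1). The step I would treat most carefully --- and the only genuine subtlety in an otherwise organisational proof --- is the interaction between admitting $p$ and the qualifier ``eventually with respect to $d$'': admitting a monic degree-one pattern is so rigid that it collapses to the equality $S\Join^d E=\mathbb{N}$, so I would make explicit that in this branch the eventual statement degenerates to a plain equality rather than a true asymptotic-in-$d$ condition, and check that this reading is consistent with the other uses of the theorem.
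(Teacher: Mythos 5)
Your proposal is correct and follows essentially the same route as the paper, which itself offers no separate proof beyond the remark that the theorem is obtained by assembling Corollary \ref{structure admissibility 1}, Proposition \ref{admissibility 2} and Proposition \ref{admissibility 3} according to the trichotomy on $\ad(p)$ and iterating on $x_1+T_p$. Your extra observations --- that monicity forces the generated monoid in the degree-one case to be all of $\mathbb{N}$, and that the ``eventually with respect to $d$'' qualifier degenerates in case (1) --- are accurate and, if anything, make the argument more explicit than the paper's.
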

	
	From Proposition \ref{num dup ad 2 1}, Proposition \ref{admissibility 3 not monic} and Corollary \ref{structure admissibility 1}, in order to extend the previous theorem to not monic patterns, we would need just a sufficient condition in the case $\ad(p)=2$.
	
	In the general case, that is when $d$ can be small, we can extend the characterization of \cite[Theorem 2.4]{borzi2018characterization} by combining it with Theorem \ref{Arf equivalent to bi=2}. Nonetheless, as the following examples show, it seems complicated to find a sort of characterization for a generic pattern.
	
	\begin{example}
		The following tables show for which values of $d$ the numerical duplication $S \Join^d E$ admits $p$.
		
		\begin{center}
			\begin{tabular}{c c}
				$\begin{array}[t]{c}
					S = \langle 3,19,20 \rangle \\
					E = 3+S \\
					p(x_1,x_2) = 3x_1-x_2. \\[10pt]
					\begin{tabular}{|c | c|}
						\hline
						$d$ & admits $p$ \\ \hline \hline
						$3$ & $\checkmark$ \\ \hline
						$9$ & $\checkmark$ \\ \hline
						$15$ & $\checkmark$ \\ \hline
						$19$ &  \\ \hline
						$21$ & $\checkmark$ \\ \hline
						$23$ &  \\ \hline
						$25$ &  \\ \hline
						$27$ & $\checkmark$ \\ \hline
						$29$ & $\checkmark$ \\ \hline
					\end{tabular}
				\end{array}$
				&
				$\begin{array}[t]{c}
					S = \langle 5,8,19,22 \rangle \\
					E = 5+S \\
					p(x_1,x_2,x_3) = 4x_1-x_2-x_3. \\[10pt]
					\begin{tabular}{|c | c|}
						\hline
						$d$ & admits $p$ \\ \hline \hline
						$5$ & \\ \hline
						$13$ & $\checkmark$ \\ \hline
						$15$ & \\ \hline
						$19$ & $\checkmark$ \\ \hline
						$21$ & \\ \hline
						$23$ & \\ \hline
						$25$ & $\checkmark$ \\ \hline
						$27$ & $\checkmark$ \\ \hline
						$29$ & $\checkmark$ \\ \hline
					\end{tabular}
				\end{array}$
			\end{tabular}
		\end{center}
	\end{example}
	
	\section{Patterns on rings}\label{Section 5}
	
	In this section, $(R,\mathfrak{m})$ will be a one-dimensional, Noetherian, Cohen-Macaulay, local ring, $\overline{R}$ will be the integral closure of $R$ in its total ring of fractions $Q(R)$. An ideal $I$ of $R$ is \emph{open} if it contains a regular element. We will assume that the residue field $k = R/\mathfrak{m}$ is infinite. From \cite[Proposition 1.18, pag 74]{kiyek2012resolution}, the last condition assures that every open ideal $I$ has an \emph{$I$-transversal element}, namely an element $x \in I$ such that $xI^n = I^{n+1}$ for $n \gg 0$. On $R$ we define the following \emph{preorder} (namely a reflexive and transitive relation): let $x,y \in R$, then $x \leq_R y$ if $y/x \in \overline{R}$. Let $p$ be the pattern
	\[ p(x_1,\dots,x_n) = \sum_{i=1}^n a_ix_i. \]
	
	\begin{definition}
		The ring $R$ \emph{admits} the pattern $p$ if for every $y_1,\dots,y_n \in R$ with $y_1 \geq_R \dots \geq_R y_n$, we have
		\[ y_1^{a_1}y_2^{a_2}\dots y_n^{a_n} \in R. \]
	\end{definition}
	
	With this definition, when the relation $\leq_R$ is a total order, $R$ is an Arf ring if and only if it admits the Arf pattern. Note that $R$ admits the trivializing pattern if and only if $R = \overline{R}$.
	
	\begin{remark}\label{algebraic n(x1-x2)}
		The ring $R$ admits the pattern $n(x_1-x_2)$, with $n \in \mathbb{N}$, if and only if for every $z \in \overline{R}$ it results $z^n \in R$. In fact, for every $z \in \overline{R} \subseteq Q(R)$, there exist $x,y \in R$ such that $z = y/x$, and by definition $y \geq_R x$.
	\end{remark}
	
	From the previous remark we can determine when a ring $R$ admits a pattern of admissibility degree $1$ applying, mutatis mutandis, Corollary \ref{structure admissibility 1}.
	
	\begin{corollary}
		The ring $R$ admits a pattern $p$ of admissibility degree $1$ if and only if it admits $T_p$ and for every $z \in \overline{R}$ it results $z^{b_i} \in R$ for all $i \in \{1,\dots n\}$.
	\end{corollary}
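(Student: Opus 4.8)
The plan is to transfer to the ring setting, \emph{mutatis mutandis}, the chain Proposition \ref{admissibility 1} $\Rightarrow$ Proposition \ref{admissibility 1 p iff Cp and Tp} $\Rightarrow$ Corollary \ref{structure admissibility 1}. Since $\ad(p)=1$, Remark \ref{centered patterns} tells us that the head of $p$ vanishes, so $p=C_p+T_p$ where $C_p(x_1,\dots,x_t)$ has zero coefficient sum (hence $b_t=0$) and $\ad(T_p)>1$. The additive identity $p=\sum_{i=1}^{n-1}b_i(x_i-x_{i+1})+b_nx_n$ becomes, for regular $y_1,\dots,y_n$, its multiplicative avatar
\[ y_1^{a_1}\cdots y_n^{a_n}=\prod_{i=1}^{n-1}\left(\frac{y_i}{y_{i+1}}\right)^{b_i}\cdot y_n^{b_n} \]
in $Q(R)$. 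This identity, combined with Remark \ref{algebraic n(x1-x2)}, is the engine of the whole argument, since $z^{b_i}\in R$ for all $z\in\overline{R}$ is precisely the statement that $R$ admits $b_i(x_1-x_2)$.

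First I would prove the ring analogue of Proposition \ref{admissibility 1 p iff Cp and Tp}: $R$ admits $p$ if and only if it admits both $C_p$ and $T_p$. Sufficiency is immediate, because the product factors as $C_p(y_1,\dots,y_t)\cdot T_p(y_{t+1},\dots,y_n)$ and $R$ is closed under multiplication. For necessity, the additive proof substitutes $0$ into the tail, and then collapses the first variables; the obstruction is that there is no absorbing element in the multiplicative world, so I would instead substitute the \emph{unit} $1$. Setting $y_{t+1}=\dots=y_n=1$ kills $T_p$ and leaves $C_p(y_1,\dots,y_t)$, the constraint $y_t\geq_R 1$ being automatic because $R\subseteq\overline{R}$; dually, collapsing $y_1=\dots=y_{t+1}$ makes the center factor equal to $y_{t+1}^{b_t}=1$ (here $b_t=0$ is essential) and leaves $T_p(y_{t+1},\dots,y_n)$. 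This produces the clause ``$R$ admits $T_p$''.

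Next I would prove the ring analogue of Proposition \ref{admissibility 1} for the zero-sum pattern $C_p$: $R$ admits $C_p$ if and only if $z^{b_i}\in R$ for every $z\in\overline{R}$ and every center index $i\in\{1,\dots,t\}$. For sufficiency, in the displayed identity applied to $C_p$ the term $y_t^{b_t}$ disappears (as $b_t=0$), each ratio $y_i/y_{i+1}$ lies in $\overline{R}$ by definition of $\geq_R$, and Remark \ref{algebraic n(x1-x2)} turns the hypothesis into ``each factor $(y_i/y_{i+1})^{b_i}\in R$'', whence the product lies in $R$. For necessity, given $z\in\overline{R}$ write $z=y/x$ with $x,y\in R$ and $y\geq_R x$, and specialize $y_1=\dots=y_i=y$, $y_{i+1}=\dots=y_t=x$; the product then collapses to $y^{b_i}x^{b_t-b_i}=z^{b_i}$, again because $b_t=0$. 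Assembling the two analogues yields exactly the asserted equivalence, with the power conditions $z^{b_i}\in R$ arising from the center $C_p$ and the clause on $T_p$ coming from the tail, in direct parallel with Corollary \ref{structure admissibility 1} (whose conditions are indexed by $b_1,\dots,b_t$).

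The step I expect to be the main obstacle is the necessity direction just sketched: the multiplicative framework has no zero to play the role of the additive substitution $x_j=0$, so the two projections (onto $C_p$ and onto $T_p$) must be engineered through units and through the vanishing $b_t=0$, all while keeping the chain $y_1\geq_R\dots\geq_R y_n$ intact after each specialization. Checking that these specializations respect $\geq_R$ --- that $y_t\geq_R 1$, and that $y\geq_R x$ propagates to $y_i\geq_R y_{i+1}$ for the collapsed tuple --- is exactly where one must invoke $R\subseteq\overline{R}$ together with the representation $z=y/x$ with $y\geq_R x$ recorded in Remark \ref{algebraic n(x1-x2)}.
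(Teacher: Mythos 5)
Your proposal is correct and is exactly the transfer the paper has in mind: the paper gives no proof, saying only that one applies Corollary \ref{structure admissibility 1} \emph{mutatis mutandis}, and you supply the two adaptations that actually require care (replacing the additive substitution $x_j=0$ by the unit $y_j=1$, checking $y_t\geq_R 1$ via $R\subseteq\overline{R}$, and using $b_t=0$ to make the center, respectively the tail, factor collapse to $1$). One remark: you prove the condition indexed by $i\in\{1,\dots,t\}$, while the statement as printed says $i\in\{1,\dots,n\}$; your version is the correct one (matching Corollary \ref{structure admissibility 1}), since the conditions for $i>t$ are not necessary --- for instance $R=k[[t^2,t^5]]$ in characteristic $2$ admits $2x_1-2x_2+3x_3$ although $t^{b_3}=t^3\notin R$ --- so the ``$n$'' in the statement should be read as ``$t$''.
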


	Similarly, if $p$ is monic and $\ad(p)=2$, we can apply, mutatis mutandis, Proposition \ref{structure admissibility 2 monic}
	
	Now we make additional assumptions on $R$. Following \cite{barucci1997maximality}, let $V$ be a discrete valuation domain with valuation $v: V \rightarrow \mathbb{N}$, and let $\mathcal{V}$ be the set of all subrings $R$ of $V$ such that $R$ is a local, Noetherian, one-dimensional, analytically irreducible, residually rational, domain and its integral closure $\overline{R}$ is equal to $V$. Set $\mathscr{V}(p)$ be the family of rings in $\mathcal{V}$ that admit the pattern $p$. If $p_1$ and $p_2$ are two patterns, then it is clear that if $\mathscr{V}(p_1) \subseteq \mathscr{V}(p_2)$ then $\mathscr{S}(p_1) \subseteq \mathscr{S}(p_2)$, i.e. $p_1$ induces $p_2$. A question naturally arise.
	
	\begin{question}
		Is the implication $\mathscr{S}(p_1) \subseteq \mathscr{S}(p_2) \Rightarrow \mathscr{V}(p_1) \subseteq \mathscr{V}(p_2)$ true?
	\end{question}

	Now fix $R \in \mathcal{V}$, note that, since $\overline{R}$ is a valuation ring, $\leq_R$ is a total preorder. Further, $x \leq_R y$ if and only if $v(x) \leq v(y)$. In this setting, the integral closure of an ideal $I$ of $R$ is
	\[ \overline{I} = I\overline{R} \cap I = \{ x \in R: v(x) \geq \min v(I) \}, \]
	(see \cite[Proposition 1.6.1, Proposition 6.8.1]{huneke2006integral}). It is not difficult to prove (see for instance \cite[Theorem 2.2]{lipman1971stable} or \cite[Theorem II.2.13]{barucci1997maximality}) that $R$ is an Arf ring if and only if $I^2 = xI$ for every integrally closed ideal $I \subseteq R$ and some $x \in I$ of minimum value. Actually, the inclusion $xI \subseteq I^2$ is always true, so what we actually prove is that $I^2 \subseteq xI$. We can generalize this idea to any subtraction pattern of degree $k$.
	
	\begin{proposition}
		The ring $R$ admits the subtraction pattern of degree $k$ if and only if $I^k \subseteq xI$ for every integrally closed ideal $I \subseteq R$ and some $x \in I$ of minimum value.
	\end{proposition}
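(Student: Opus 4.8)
The plan is to translate pattern admissibility into the multiplicative condition and exploit that, since $\overline{R}=V$ is a discrete valuation ring, the integrally closed ideals of $R$ are exactly the sets $I_m=\{w\in R: v(w)\ge m\}$; thus an integrally closed ideal is determined by its minimal value $m=\min v(I)$, and its elements of minimal value are precisely those $z\in R$ with $v(z)=m$. Writing the subtraction pattern of degree $k$ as $p_k=x_1+\dots+x_k-x_{k+1}$, the definition of admissibility for rings says that $R$ admits $p_k$ exactly when $y_1\cdots y_k/y_{k+1}\in R$ for all $y_1\ge_R\dots\ge_R y_{k+1}$, i.e. whenever $v(y_1)\ge\dots\ge v(y_{k+1})$. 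I will assume $k\ge 2$; the degenerate case $k=1$ is the trivializing pattern, corresponding to $R=\overline R$, and is handled separately.

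For \emph{necessity}, suppose $R$ admits $p_k$, fix an integrally closed ideal $I$ and any $x\in I$ of minimal value $m=\min v(I)$. Given $z_1,\dots,z_k\in I$ I have $v(z_i)\ge m=v(x)$, hence $z_i\ge_R x$; reordering so that $z_1\ge_R\dots\ge_R z_k\ge_R x$ and applying admissibility of $p_k$ yields $z_1\cdots z_k/x\in R$. Its value is $\sum_i v(z_i)-m\ge (k-1)m\ge m$ (using $k\ge 2$), so by the description $\overline I=\{w: v(w)\ge m\}$ it lies in $I$. Thus $z_1\cdots z_k\in xI$, and since such products generate $I^k$ I obtain $I^k\subseteq xI$ --- in fact for \emph{every} minimal-value $x$.

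For \emph{sufficiency}, take $y_1\ge_R\dots\ge_R y_{k+1}$ and set $m=v(y_{k+1})$ and $I=\overline{(y_{k+1})}=I_m$. First I reduce to small values: since $k\ge 2$ one has $v(y_1\cdots y_k/y_{k+1})=\sum_i v(y_i)-m\ge v(y_1)$, so if $v(y_1)\ge \mathfrak c$, where $\mathfrak c$ is the conductor exponent (so that $\{w:v(w)\ge\mathfrak c\}\subseteq R$), the quotient already lies in $R$; hence I may assume every $y_i$ has value below $\mathfrak c$, leaving only finitely many configurations. Now $y_1,\dots,y_k\in I$, and by hypothesis $I^k\subseteq xI$ for some minimal-value $x$, so $y_1\cdots y_k=x\alpha$ with $\alpha\in I$. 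To conclude $y_1\cdots y_k/y_{k+1}\in R$ it would suffice to know $I^k\subseteq y_{k+1}I$, for then $y_1\cdots y_k/y_{k+1}\in I\subseteq R$.

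The crux, and the step I expect to be the main obstacle, is therefore the \emph{independence of the condition from the chosen minimal-value element}: that $I^k\subseteq xI$ for some $x$ of minimal value forces $I^k\subseteq zI$ for every $z$ of minimal value. When $k=2$ this is the classical stability argument underlying the Arf case: $zI\subseteq I^2\subseteq xI$, while the length formula $\ell_R(R/wR)=v(w)$ (valid because $R$ is residually rational and analytically irreducible) gives $\ell_R(R/zI)=v(z)+\ell_R(R/I)=v(x)+\ell_R(R/I)=\ell_R(R/xI)$, so the inclusion $zI\subseteq xI$ is forced to be an equality and $I^2=zI$. For $k\ge 3$ this shortcut fails, since $I^2\not\subseteq xI$ in general; after dividing by $x$ the problem becomes showing that multiplication by the unit $u=x/z\in V$ carries the ideal $x^{-1}I^k\subseteq R$ back into $R$. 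I would attack this by successive approximation: residual rationality places the leading coefficient of $u$ in $k\subseteq R$, so one can repeatedly subtract an $R$-multiple matching the leading term, strictly raising the valuation of the remainder, until it reaches $\mathfrak c$ and lands in the conductor $\{w:v(w)\ge\mathfrak c\}\subseteq R$, with analytic irreducibility (completeness of $\widehat R$ inside $\widehat V$) ensuring convergence. The delicate point --- and the real heart of the argument --- is to rule out the approximation stalling at a gap value, which is precisely where one must use the hypothesis for \emph{all} integrally closed ideals rather than for $I_m$ alone.
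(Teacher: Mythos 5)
Your necessity argument is correct and is essentially the paper's: order the $k$ factors, append the minimal-value element $x$, apply admissibility of $x_1+\dots+x_k-x_{k+1}$, and observe that the resulting quotient has value at least $\min v(I)$, hence lies in $I=\overline I$. Your extra remark that the inclusion in fact holds for \emph{every} element of minimal value is accurate and, as it turns out, is the key to the whole statement.

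The sufficiency direction, however, is not complete as written, and you say so yourself. The paper's proof takes $I=\overline{(y_{k+1})}$, notes $y_1,\dots,y_k\in I$, and then applies the inclusion $I^k\subseteq y_{k+1}I$ directly --- i.e.\ it uses the hypothesis for the \emph{specific} minimal-value element $y_{k+1}$, which is legitimate only under the reading ``for any $x$ of minimal value'' (the reading that the necessity direction actually establishes, so the equivalence is intended and true in that form). You instead take the literal ``some $x$'' reading and try to deduce the condition for $y_{k+1}$ from the condition for an unknown $x$; this is precisely the ``some implies every'' independence problem. Your resolution for $k=2$ via $zI\subseteq I^2\subseteq xI$ and the length formula $\ell_R(R/wJ)=v(w)+\ell_R(R/J)$ is correct, but for $k\ge 3$ you only offer a successive-approximation heuristic and explicitly flag that you cannot rule out the approximation stalling at a gap value. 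That is a genuine gap: the naive analogue $zI^{k-1}\subseteq I^k\subseteq xI$ gives no length coincidence (the two colengths differ by $\ell_R(R/I^{k-1})-\ell_R(R/I)$), and nothing in your sketch replaces it. The clean fix is not to prove the independence at all: state and prove the necessity with ``every $x$ of minimal value'' (your argument already does this), and then the sufficiency is the paper's two-line computation with $x=y_{k+1}$. As submitted, your proof establishes the proposition only for $k\le 2$.
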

	\begin{proof}
		\emph{Necessity}. Let $i_1,i_2,\dots,i_k \in I$, since $\leq_R$ is a total preorder, we can assume that $i_1 \geq_R \dots \geq_R i_k$. If $x \in I$ is an element of minimum value, then $i_k \geq_R x$. By hypothesis $i_1i_2 \dots i_k x^{-1} \in I$, hence $i_1i_2 \dots i_k \in xI$. \\
		\emph{Sufficiency}. Let $y_1,\dots,y_{k+1} \in R$ with $y_1 \geq_R \dots \geq_R y_{k+1}$. Set $I$ to be the integral closure of $Ry_{k+1}$. Since $v(y_1) \geq v(y_2) \geq \dots \geq v(y_{k+1})$ and $I$ is integrally closed, then $y_i \in I$ for all $i \in \{ 1,\dots,k \}$. By hypothesis $y_1 \dots y_k \in I^k \subseteq y_{k+1}I$, then $y_1\dots y_k y_{k+1}^{-1} \in I \subseteq R$.
	\end{proof}
	
	In \cite[Theorem II.2.13]{barucci1997maximality} it was proved that $R$ is Arf if and only if $v(R)$ is Arf and the multiplicity sequence of $R$ and $v(R)$ coincides. 
	
	\begin{question}
		For an arbitrary pattern $p$ are there any characterization similar to the previous one?
	\end{question}
	
	\noindent
	\textbf{Acknowledgments}. I would like to thank Marco D'Anna for his constant support, Maria Bras-Am\'{o}ros for useful conversations and email exchanges, and Nicola Maugeri for indicating some good references.

	\bibliographystyle{abbrv}
	\bibliography{Reference.bib}
	
\end{document}